  \newtheorem{thm}{Theorem}[section]
  \newtheorem{lem}[thm]{Lemma}
  \newtheorem{prop}[thm]{Proposition}
  \newtheorem{cor}[thm]{Corollary}
  \theoremstyle{definition}
  \newtheorem{defn}[thm]{Definition}
  \newtheorem{exm}[thm]{Example}
  \newtheorem{rmk}[thm]{Remark}
 \newcommand\ra{\rightarrow}
  \newcommand\gam{\gamma}
 \newcommand\s{\subseteq}
 \numberwithin{equation}{section}
\begin{document}

\title{\bf Some results on $L$-complete lattices }
\author{\bf Anatolij Dvure\v{c}enskij$^{1,2}$, Omid Zahiri$^{3}$\footnote{Corresponding author: O. Zahiri }\\
 {\small\em $^1$ Mathematical Institute,  Slovak Academy of Sciences, \v Stef\'anikova 49, SK-814 73 Bratislava, Slovakia} \\
{\small\em $^2$ Depart. Algebra  Geom.,  Palack\'{y} Univer.17. listopadu 12,
CZ-771 46 Olomouc, Czech Republic} \\
{\small\em  $^3$University of Applied Science and Technology, Tehran, Iran}\\
{\small\em dvurecen@mat.savba.sk\quad  om.zahiri@gmail.com} \\
}
\date{}
\maketitle
\begin{abstract}
The paper deals with special types of $L$-ordered set, $L$-fuzzy complete lattices, and fuzzy directed complete posets (fuzzy $dcpo$s).
First, a theorem for constructing monotone maps is proved, a characterization for monotone maps on an $L$-fuzzy
complete lattice is obtained, and
it is proved that if $f$ is a monotone map on an $L$-fuzzy complete lattice $(P;e)$, then $\sqcap S_f$ is
the least fixpoint of $f$. A relation between $L$-fuzzy complete lattices and fixpoints is found and
fuzzy versions of monotonicity, rolling, fusion  and exchange rules on $L$-complete lattices are stated.
Finally, we investigate $Hom(P,P)$, where $(P;e)$ is a fuzzy $dcpo$, and we
show that $Hom(P,P)$ is a fuzzy $dcpo$, the map $\gam\mapsto \bigwedge_{x\in P}e(x,\gam(x))$ is a
fuzzy directed subset of $Hom(P,P)$, and we investigate its join.
\end{abstract}

{\small {\it AMS Mathematics Subject Classification (2010):  } 06A15, 03E72.}

{\small {\it Keywords:}  Fuzzy complete lattice, Fixpoint, Fuzzy dcpo, Fuzzy directed poset, Monotone map. }

{\small {\it Acknowledgement:} This work was supported by  the Slovak Research and Development Agency under contract APVV-0178-11 and  grant VEGA No. 2/0059/12 SAV, and
CZ.1.07/2.3.00/20.0051.}

\section{ Introduction }
Fixed point theory serves as an essential tool for various branches of mathematical
analysis and its applications. There are three main approaches to this
theory. The first one is the metric approach in which one makes use of the metric properties
of the underlying spaces and self-maps. (A primary example of this approach is Banach's
Contraction Mapping Theorem.) The second approach is the topological one in which
one utilizes the topological properties of the underlying spaces and continuity of self-maps. (A primary example of this approach is Brouwer's Fixed Point Theorem.) Finally,
the third approach is the order-theoretic approach.

Recently,  based on complete Heyting algebras and fuzzy $L$-order relation, Zhang and Xie \cite{ZXF} have defined and studied
$L$-fuzzy complete lattices, which are generalizations of traditional
complete lattices. They discussed their properties, showed  that they coincide with complete and co-complete categories
enriched over the frame $L$ \cite{Wagner}, and they proved the Tarski Fixed-Point Theorem
for an $L$-fuzzy complete lattice.

Using complete Heyting algebras, Fan and Zhang \cite{Fan,ZF}
studied quantitative domains through fuzzy
set theory. Their approach first defines a fuzzy partial order, specifically a degree function, on a non-empty set.
Then they define and study fuzzy directed subsets and (continuous) fuzzy directed complete posets ($dcpo$s for short).
Moreover, Yao \cite{YSI} and   Yao and Shi \cite{Y2S} pursued an investigation on quantitative domains via fuzzy sets. They defined the notions of
fuzzy Scott topology on fuzzy $dcpo$s, Scott convergence and topological convergence for stratified $L$-filters and study them. They
showed that the category of fuzzy $dcpo$s with fuzzy Scott continuous maps is Cartesian-closed.

In \cite{ZL}, Zhang and Liu defined a kind of an $L$-frame by a pair $(A,i_A)$, where A is a classical frame and
$i_A:L\ra A$ is a frame morphism. For a stratified $L$-topological space $(X,\delta)$, the pair $(\delta,i_X)$
is one of this kind of $L$-frames, where $i_X:L\ra \delta$, is a map which sends $a\in L$ to the constant map with the value $a$.
Conversely, a point of an $L$-frame $(A,i_A)$ is a frame morphism $p:(A,i_A)\ra (L,id_L)$ satisfying $p\circ i_A=id_L$ and
$Lpt(A)$ denotes the set of all points of $(A,i_A)$. Then $\{\Phi_x:Lpt(A)\ra L|\ \forall\, p\in Lpt(A),\ \Phi_x(p)=p(x)\}$
is a stratified $L$-topology on $Lpt(A)$. By these two assignments, Zhang and Liu constructed an adjunction between
$SL-Top$ and $L-Loc$ and consequently they established the Stone Representation Theorem for distributive lattices by means of this adjunction.
They pointed out that, from the viewpoint of lattice theory, Rodabaugh's fuzzy version of the Stone
representation theory is just one and it has nothing different from the classical one.
While in our opinion, Zhang-Liu's $L$-frames
preserve many features and also seem to have no strong difference from a crisp one.

In \cite{Y2}, Yao introduced an $L$-frame by an $L$-ordered set equipped with some further conditions. It is a
complete $L$-ordered set with the meet operation having a right fuzzy adjoint. They established an
adjunction between the category of stratified $L$-topological spaces and the category of $L$-locales,
the opposite category of this kind of $L$-frames.  Moreover, Yao and Shi, \cite{Yao-Shi},
defined on fuzzy $dcpo$s an $L$-topology, called the fuzzy Scott topology, and then they studied its
properties. They defined Scott convergence, topological convergence for stratified $L$-filters and showed that
a fuzzy $dcpo$ is continuous if and only if, for any stratified $L$-filter, the fuzzy Scott convergence coincides with convergence
with respect to the fuzzy Scott topology.

In the mid-1950's Tarski \cite{TA} published an interesting result: Every complete lattice has the fixed point property, that is, every order preserving mapping has a fixpoint.
Davis \cite{DA} proved the converse: Every lattice with the fixed point property is complete.
Tarski's Fixpoint Theorem generalizes to $CPO$ (an abbreviation for a $\vee$-complete poset with a bottom element), i.e. if $f:P\ra P$ is an order preserving map and $P$ is a $CPO$, then
the set of fixpoints of $f$,  $Fix(f)$, is a $CPO$ and so $Fix(f)$ has a least element. But, what is the relationship
between the least element of $f$ and other
points of $P$? The main purpose of this paper is to answer to this question.

The present paper is organized as follows. In Section 2, we list some preliminary notions and results that will be used in the paper.
In Section 3, we consider $L$-fuzzy complete lattices. We show that if $f$ is a monotone map on an $L$-fuzzy complete
lattice $(P;e)$, then $\sqcap S_f$ and $\sqcup T_f$ are the least and greatest fixpoint of $f$ and so we find a relation
between these elements and other points of $P$. Also, we show that
every $L$-complete lattice is, up to isomorphism, an $L$-complete lattice of fixpoints and we propose
fuzzy versions of monotonicity, rolling, fusion  and exchange rules on $L$-complete lattices.
In Section 4, we define the concept of a $t$-fixpoint and we prove that if $(P;e)$ is a fuzzy $dcpo$, then $H_P$, the set of monotone maps on $(P;e)$, is a fuzzy $dcpo$. We use it to find some of $t$-fixpoints of $f$. Finally, we find  conditions
under which $\sqcap S_f$ exists.

\section{ Preliminaries}

We start with some notions from \cite{davey, Fan}. A non-empty subset $D$ of a poset $(P;\leq)$ is called {\it directed} if, for each pair of elements $x,y\in D$, there exists $z\in D$ such that
$x,y\leq z$. We say that a poset $(P;\leq)$ is a {\it pre}-$COP$ or a $dcpo$ (an abbreviation for a directed complete partially ordered set) if, for each directed subset $D$ of $P$, the join of $D$, $\bigvee D$ (the least upper bound of $D$ in $L$) exists. A $dcpo$ $(P;\leq)$ is called a $CPO$ (an abbreviation for a complete partially ordered set) if
$P$ has a bottom element.

Let $(L;\vee,\wedge,0,1)$  be a bounded lattice. For $a,b\in L$, we say that $c\in L$ is a {\it relative pseudocomplement} of $a$
with respect to $b$ if $c$ is the largest element with $a\wedge c\leq b$ and we denote it by $a\ra b$.
A lattice $(L;\vee,\wedge)$ is said to be a {\it Heyting algebra} if the relative pseudocomplement $a\to b$ exists for all elements $a,b \in L$.
A {\it frame} is a complete lattice $(L;\vee,\wedge)$ satisfying the infinite distributive law $a\wedge \bigvee S=\bigvee_{s\in S}(a\wedge s)$ for
every $a\in L$ and $S\subseteq L$. It is well known that $L$ is a frame if and only if it is a complete Heyting algebra.
In fact, if $(L;\vee,\wedge)$ is a frame, then for each $a,b\in L$, the relative pseudocomplement of $a$ with respect to $b$,  is the element
$a\ra b:=\vee\{x\in L|\ a\wedge x\leq b\}$. In the following, we list some important properties of complete Heyting algebras,
for more details relevant to frames and Heyting algebras, we refer to \cite{Johnston} and \cite[Section 7]{Blyth}:

\begin{enumerate}
\item[(i)]\ $(x\wedge y)\ra z=x\ra (y\ra z)$;
\item[(ii)]\ $x\ra (\bigwedge Y)=\bigwedge_{y\in Y}(x\ra y)$;
\item[(iii)]\ $(\bigvee Y)\ra z=\bigwedge_{y\in Y}(y\ra z)$.
\end{enumerate}

From now on, in this paper, $(L;\vee,\wedge,0,1)$ or simply $L$ always denotes a frame and $L^X$ denotes the set of
all maps from a set $X$ into $L$.

\begin{defn}\cite{Bel1,Bel2,ZF,ZF2}
Let $P$ be a set and $e : P\times P\ra L$ be a map. The pair $(P;e)$ is called an {\it $L$-ordered set} if, for all $x,y,z\in P$, we have
\begin{itemize}
 \item[{\rm(E1)}] $e(x,x)=1$;
 \item[{\rm(E2)}] $e(x,y)\wedge e(y,z)\leq e(x,z)$;
 \item[{\rm(E3)}] $e(x,y)=e(y,x)=1$ implies $x=y$.
\end{itemize}

\begin{prop}{\rm \cite[Prop. 3.7]{YSI}}\label{YSI Prop 3.7}
Let $(X;e)$ be an $L$-ordered set. Then for each $x,y\in X$,
$$e(x,y)=\bigwedge_{z\in X}\big(e(z,x)\ra e(z,y) \big)=\bigwedge_{z\in X}\big(e(y,z)\ra e(x,z) \big).$$
\end{prop}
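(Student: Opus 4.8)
The plan is to prove each of the two equalities by antisymmetry of the order of $L$, using only (E1), (E2), and the defining adjunction of the relative pseudocomplement: for $a,b,c\in L$ one has $a\wedge b\leq c$ if and only if $b\leq a\to c$. I will also use the identity $1\to a=a$, which holds in any Heyting algebra since $1\to a$ is by definition the largest $x$ with $1\wedge x\leq a$, i.e.\ with $x\leq a$.

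For the first equality, I would first establish $e(x,y)\leq\bigwedge_{z\in X}\big(e(z,x)\to e(z,y)\big)$. Fixing $z\in X$, by the adjunction it suffices to verify $e(z,x)\wedge e(x,y)\leq e(z,y)$, which is precisely (E2); taking the infimum over all $z$ then gives the inequality. For the reverse inequality, I would specialize the index of the meet to $z=x$: by (E1) we have $e(x,x)=1$, hence the term indexed by $x$ is $e(x,x)\to e(x,y)=1\to e(x,y)=e(x,y)$, so $\bigwedge_{z\in X}\big(e(z,x)\to e(z,y)\big)\leq e(x,y)$. Combining the two inequalities yields the first equality.

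The second equality is handled symmetrically. For every $z\in X$, the adjunction reduces $e(x,y)\leq e(y,z)\to e(x,z)$ to $e(x,y)\wedge e(y,z)\leq e(x,z)$, which is again (E2), and hence $e(x,y)\leq\bigwedge_{z\in X}\big(e(y,z)\to e(x,z)\big)$. Specializing the meet to $z=y$ and using (E1) together with $1\to a=a$, the term indexed by $y$ is $e(y,y)\to e(x,y)=e(x,y)$, so $\bigwedge_{z\in X}\big(e(y,z)\to e(x,z)\big)\leq e(x,y)$, which finishes the proof.

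I do not anticipate a genuine obstacle here: the proposition is essentially a reformulation of transitivity (E2), with reflexivity (E1) providing the sharpness of the bound via the choice $z=x$ (resp.\ $z=y$). The only points that require a little care are invoking the correct direction of the Heyting adjunction and the elementary identity $1\to a=a$; properties (i)--(iii) of the frame are not needed.
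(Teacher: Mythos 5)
Your argument is correct: both inequalities follow exactly as you say, the upper bound for $e(x,y)$ coming from (E2) via the Heyting adjunction and the lower bound from specializing the meet at $z=x$ (resp.\ $z=y$) together with (E1) and $1\to a=a$. The paper itself states this proposition as a citation of \cite[Prop.\ 3.7]{YSI} and gives no proof, so there is nothing to compare against; your proof is the standard one and is complete.
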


In an $L$-ordered set $(P;e)$, the map $e$ is called an {\it $L$-order relation} on $P$.
If $(P;\leq)$ is a classical poset, then $(P;\chi_{\leq})$ is an $L$-ordered set, where $\chi_{\leq}$ is the
characteristic function of $\leq$. We usually denote this $L$-ordered set by $(P;e_\leq)$. Moreover, for each $L$-ordered set $(P;e)$,
the set $\leq_e=\{(x,y)\in P\times P|\ e(x,y)=1\}$
is a crisp partial order on $P$ and $(P;\leq_e)$ (if there is no ambiguity we write $(P;\leq)$) is a poset. Assume that $(P;e)$ is an $L$-ordered set and $\phi\in L^P$.
Define $\downarrow \phi\in L^P$ and $\uparrow \phi\in L^P$, see \cite{Fuentes,ZF}, as follows:
 $$\downarrow\phi(x)=\bigvee_{x'\in P}(\phi(x')\wedge e(x,x')), \quad \quad \uparrow\phi(x)
 =\bigvee_{x'\in P}
 (\phi(x')\wedge e(x',x)),\quad \forall x\in P.$$
\end{defn}

\begin{defn}\cite{Y2,YSI}
 A map $f:(P;e_P)\ra (Q;e_Q)$ between two $L$-ordered sets is called {\it monotone} if for all $x,y\in P$,
$e_P(x,y)\leq e_Q(f(x),f(y))$.
\end{defn}
A monotone map $f:(P;e_P)\ra (Q;e_Q)$ is called an {\it $L$-order isomorphism} if $f$ is one-to-one,
onto and $e_P(x,y)=e_Q(f(x),f(y))$ for all $x,y\in P$.

\begin{defn}\cite{YL,YSI}
Let $(P;e_P)$ and $(Q;e_Q)$ be two $L$-ordered sets and $f:P\ra Q$ and $g:Q\ra P$ be two monotone maps. The pair
$(f,g)$ is called a {\it fuzzy Galois connection} between $P$ and $Q$ if $e_Q(f(x),y)=e_P(x,g(y))$
for all $x\in P$ and $y\in Q$, where $f$ is called the {\it fuzzy left adjoint} of $g$, and dually, $g$ is called the {\it fuzzy right adjoint} of $f$.
\end{defn}

\begin{thm}\label{Galois}{\rm \cite[Thm. 3.2]{YL}}
A pair
$(f,g)$ is a fuzzy Galois connection on $(X,e_X)$ and $(Y,e_Y)$ if and only if both $f$ and $g$ are monotone and
$(f,g)$ is a (crisp) Galois connection on $(X,\leq_{e_X})$ and $(Y,\leq_{e_Y})$.
\end{thm}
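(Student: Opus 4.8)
The plan is to establish the biconditional by proving the two implications separately, the forward one being essentially a matter of unwinding definitions and the backward one carrying the real (though short) content.

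For the forward implication I would argue as follows. If $(f,g)$ is a fuzzy Galois connection, then $f$ and $g$ are monotone by hypothesis, so it only remains to see that $(f,g)$ is a crisp Galois connection between $(X;\leq_{e_X})$ and $(Y;\leq_{e_Y})$. But $f(x)\leq_{e_Y}y$ unwinds to $e_Y(f(x),y)=1$, which by the fuzzy adjunction identity equals $e_X(x,g(y))$, and the latter being $1$ unwinds to $x\leq_{e_X}g(y)$. Hence $f(x)\leq_{e_Y}y$ holds exactly when $x\leq_{e_X}g(y)$ holds, which is what a crisp Galois connection requires.

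For the converse I would assume $f,g$ monotone and $(f,g)$ a crisp Galois connection between the induced posets, and derive $e_Y(f(x),y)=e_X(x,g(y))$ for all $x\in X$, $y\in Y$. First I would recall the two standard consequences of a crisp Galois connection, $x\leq_{e_X}g(f(x))$ and $f(g(y))\leq_{e_Y}y$, which in terms of $e$ read $e_X(x,g(f(x)))=1$ and $e_Y(f(g(y)),y)=1$. Then, with $x$ and $y$ fixed, monotonicity of $f$ gives $e_X(x,g(y))\leq e_Y(f(x),f(g(y)))$, and combining this with $e_Y(f(g(y)),y)=1$ and the transitivity axiom (E2) yields
\[ e_X(x,g(y))\leq e_Y\big(f(x),f(g(y))\big)\wedge e_Y\big(f(g(y)),y\big)\leq e_Y(f(x),y). \]
Symmetrically, monotonicity of $g$ together with $e_X(x,g(f(x)))=1$ and (E2) gives
\[ e_Y(f(x),y)\leq e_X\big(x,g(f(x))\big)\wedge e_X\big(g(f(x)),g(y)\big)\leq e_X(x,g(y)), \]
and the two inequalities together force equality, i.e. the fuzzy adjunction.

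I do not expect a serious obstacle. The one delicate point is transporting the crisp identities $x\leq_{e_X}g(f(x))$ and $f(g(y))\leq_{e_Y}y$ into the fuzzy setting as the equalities $e_X(x,g(f(x)))=1$ and $e_Y(f(g(y)),y)=1$ — this is precisely where it matters that $\leq_{e_X}$ and $\leq_{e_Y}$ are the crisp orders induced by $e_X$ and $e_Y$; once the value $1$ is available, axioms (E1) and (E2) do the rest. As an alternative to the monotonicity-plus-(E2) argument, one could instead expand both $e_Y(f(x),y)$ and $e_X(x,g(y))$ as infima via Proposition \ref{YSI Prop 3.7} and match the two expressions term by term using the crisp adjunction, but the route outlined above is shorter.
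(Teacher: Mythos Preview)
Your argument is correct. The forward implication is exactly as you say, and in the backward implication your use of the unit and counit inequalities $e_X(x,g(f(x)))=1$ and $e_Y(f(g(y)),y)=1$ together with fuzzy monotonicity and (E2) cleanly yields both inequalities between $e_Y(f(x),y)$ and $e_X(x,g(y))$.

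As for comparison with the paper: the paper does not prove this theorem at all --- it simply imports it from \cite[Thm.~3.2]{YL} as a quoted result. So there is nothing in the present paper to compare your proof against; your write-up stands on its own and is the standard short argument one would expect for this fact.
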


\begin{defn}\cite{YSI,YL}
Let $(P;e)$ be an $L$-ordered set and $S\in L^P$. An element $x_0$ is called a {\it join} (respectively, a {\it meet}) of $S$, in symbols $x_0=\sqcup S$
(respectively, $x_0=\sqcap S$)  if, for all $x\in P$,
\begin{itemize}
\item[{\rm (J1)}] $S(x)\leq e(x,x_0)$ (respectively, (M1), $S(x)\leq e(x_0,x))$;
\item[{\rm (J2)}] $\bigwedge_{y\in P} (S(y)\ra e(y,x))\leq e(x_0,x)$ (respectively, (M2),
$\bigwedge_{y\in P} (S(y)\ra e(x,y))\leq e(x,x_0))$.
\end{itemize}
If a join and a meet of $S$ exist, then they are unique (see \cite{ZF}).
\end{defn}

\begin{thm}\label{join and meet}{\em\cite[Thm. 2.2]{ZF}}
Let $(P;e)$ be an $L$-ordered set, $x_0\in P$, and $S\in L^P$. Then
\begin{itemize}
\item[{\rm (i)}] $x_0=\sqcup S$ if and only if $e(x_0,x)=\bigwedge_{y\in P} (S(y)\ra e(y,x))$ for all $x\in P$;
\item[{\rm (ii)}] $x_0=\sqcap S$ if and only if $e(x,x_0)=\bigwedge_{y\in P} (S(y)\ra  e(x,y))$ for all $x\in P$.
\end{itemize}
\end{thm}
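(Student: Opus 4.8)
The plan is to unwind both sides of each equivalence into a pair of inequalities and then play the residuation adjunction $a\wedge b\leq c \iff a\leq b\ra c$ (valid in any frame) against transitivity (E2) and reflexivity (E1). Part (ii) I will not prove directly; instead I will deduce it from (i) by passing to the opposite $L$-order. The whole argument is elementary — no use of Proposition \ref{YSI Prop 3.7} is needed — so there is no genuine obstacle, only the bookkeeping of getting the right instance of transitivity to meet the residuation step.

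For the forward implication of (i), assume $x_0=\sqcup S$, i.e. (J1) and (J2) hold. Condition (J2) is exactly $\bigwedge_{y\in P}(S(y)\ra e(y,x))\leq e(x_0,x)$ for every $x$, so it remains to prove the reverse inequality. By the definition of the infimum in $L$ it suffices to show $e(x_0,x)\leq S(y)\ra e(y,x)$ for each fixed $y$, which by residuation is the same as $e(x_0,x)\wedge S(y)\leq e(y,x)$. Now (J1) gives $S(y)\leq e(y,x_0)$, hence $e(x_0,x)\wedge S(y)\leq e(y,x_0)\wedge e(x_0,x)\leq e(y,x)$ by (E2), which is what we want; combining the two inequalities gives the claimed equality.

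For the converse of (i), suppose $e(x_0,x)=\bigwedge_{y\in P}(S(y)\ra e(y,x))$ for all $x\in P$. Then (J2) is immediate since equality in particular gives $\bigwedge_{y\in P}(S(y)\ra e(y,x))\leq e(x_0,x)$. To get (J1), specialize the hypothesis at $x:=x_0$ and use (E1): we obtain $1=e(x_0,x_0)=\bigwedge_{y\in P}(S(y)\ra e(y,x_0))$, so $S(y)\ra e(y,x_0)=1$ for every $y$, i.e. $S(y)\leq e(y,x_0)$, which is (J1) after renaming $y$ to $x$. This establishes (i).

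Finally, for (ii) I would introduce the opposite $L$-order $e^{\partial}(x,y):=e(y,x)$; it is routine that $(P;e^{\partial})$ is again an $L$-ordered set (E1) and (E3) are symmetric in the two arguments, and (E2) for $e^{\partial}$ is just (E2) for $e$ with the order of the factors reversed). Comparing (M1)--(M2) with (J1)--(J2) one sees that $x_0=\sqcap S$ in $(P;e)$ is literally the same statement as $x_0=\sqcup S$ in $(P;e^{\partial})$. Applying part (i) to $(P;e^{\partial})$ and rewriting $e^{\partial}$ back in terms of $e$ yields precisely $e(x,x_0)=\bigwedge_{y\in P}(S(y)\ra e(x,y))$ for all $x\in P$. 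The only spot that needs a little care throughout is keeping the residuation step $e(x_0,x)\wedge S(y)\leq e(y,x)$ matched with the correct instance of (E2), and, in the passage to $e^{\partial}$, reversing all the arrows consistently.
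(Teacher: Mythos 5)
Your proof is correct. The paper itself states this result as a citation of \cite[Thm.\ 2.2]{ZF} and gives no proof, so there is nothing internal to compare against; but your argument is exactly the standard one. The forward direction of (i) correctly splits the equality into (J2) (one inequality for free) plus the residuation step $e(x_0,x)\wedge S(y)\leq e(y,x)$, which you justify by (J1) together with the right instance of (E2); the converse correctly extracts (J1) by specializing at $x=x_0$ and using (E1). The reduction of (ii) to (i) via the opposite $L$-order $e^{\partial}(x,y)=e(y,x)$ is also sound: (E1) and (E3) are symmetric, (E2) for $e^{\partial}$ follows from (E2) for $e$ after commuting the meet, and (M1)--(M2) for $e$ are literally (J1)--(J2) for $e^{\partial}$. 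No gaps.
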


\begin{defn}\label{down and up}
Let $(P;e)$ be an $L$-ordered set. For all $a\in P$, $\downarrow a:P\ra L$ and $\uparrow a:P\ra L$  are defined by $\downarrow a(x)=e(x,a)$ and $\uparrow a(x)=e(a,x)$, respectively.
It can be easily shown that $\sqcup\downarrow a=a$ and $\sqcap\uparrow a=a$ for all $a\in P$ (see \cite[Prop 3.16]{YSI}).
\end{defn}

In \cite{ZXF}, Zhang  et al. an $L$-fuzzy complete lattice was introduced:
An $L$-ordered set $(P;e)$ is called an $L$-{\it fuzzy complete lattice} (or an $L$-{\it complete lattice}, for short)
if, for all $S\in L^P$, $\sqcap S$
and $\sqcup S$ exist. If $(P;e)$ is an $L$-complete lattice,
then $(P;\leq_e)$ is a complete lattice,
where $\vee S=\sqcup \chi_S$ and $\wedge S=\sqcap\chi_S$ for any $S\s P$.

\begin{thm}\label{thm: complete} {\rm \cite[Thm. 2.20]{ZXF} }
Let $X$ be a non-empty set. Then $(L^X;\tilde{e})$ is an $L$-complete lattice, where $\tilde{e}(f,g)=\bigwedge_{x\in X}
(f(x)\ra g(x))$ for all $f,g\in L^X$.
\end{thm}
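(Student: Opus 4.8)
The plan is to proceed in two stages. First I would confirm that $(L^X;\tilde e)$ is an $L$-ordered set; then, for an arbitrary fuzzy subset $S\in L^{L^X}$, I would write down explicit pointwise formulas for $\sqcup S$ and $\sqcap S$ and check that they satisfy the defining conditions (J1)--(J2) and (M1)--(M2) (equivalently, the equational characterizations in Theorem~\ref{join and meet}).

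The first stage is just unwinding the definitions. Condition (E1) holds because $f(x)\ra f(x)=1$ for all $x$; condition (E2) follows by applying the Heyting-algebra inequality $(a\ra b)\wedge(b\ra c)\le a\ra c$ pointwise, together with $\bigwedge_x u(x)\wedge\bigwedge_x v(x)\le\bigwedge_x\big(u(x)\wedge v(x)\big)$; and (E3) holds because $\tilde e(f,g)=1$ means $f(x)\le g(x)$ for every $x$, so $\tilde e(f,g)=\tilde e(g,f)=1$ forces $f=g$. (This simultaneously identifies $\le_{\tilde e}$ with the pointwise order on $L^X$.)

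For the second stage the natural candidates are $(\sqcup S)(x):=\bigvee_{g\in L^X}\big(S(g)\wedge g(x)\big)$ and $(\sqcap S)(x):=\bigwedge_{g\in L^X}\big(S(g)\ra g(x)\big)$ for $x\in X$. For the join, (J1) is immediate since $S(g)\wedge g(x)\le(\sqcup S)(x)$ by construction; for (J2) I would put $\alpha:=\bigwedge_{g}\big(S(g)\ra\tilde e(g,h)\big)$, use the infinite distributive law to rewrite $\alpha\wedge(\sqcup S)(x)=\bigvee_{g}\big(\alpha\wedge S(g)\wedge g(x)\big)$, and observe that for each $g$ one has $\alpha\wedge S(g)\le\tilde e(g,h)\le g(x)\ra h(x)$, whence $\alpha\wedge(\sqcup S)(x)\le h(x)$ for all $x$, i.e. $\alpha\le\tilde e(\sqcup S,h)$. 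The meet is handled dually: (M1) follows from $(\sqcap S)(x)\le S(g)\ra g(x)$, and for (M2) one puts $\beta:=\bigwedge_{g}\big(S(g)\ra\tilde e(h,g)\big)$ and uses $\beta\wedge S(g)\le\tilde e(h,g)\le h(x)\ra g(x)$ to deduce $\beta\wedge h(x)\le(\sqcap S)(x)$, i.e. $\beta\le\tilde e(h,\sqcap S)$. Since $S$ was arbitrary, every fuzzy subset of $L^X$ has a join and a meet, so $(L^X;\tilde e)$ is an $L$-complete lattice.

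Most of the work is routine residuation bookkeeping built on properties (i)--(iii) of complete Heyting algebras recalled above; the one place where I genuinely need the \emph{infinite} distributive law of the frame $L$ --- as opposed to finite Heyting-algebra manipulations --- is the rewriting of $\alpha\wedge(\sqcup S)(x)$ in the verification of (J2), so that is the step I would present with the most care.
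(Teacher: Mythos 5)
Your argument is correct and complete: the verification that $\tilde e$ satisfies (E1)--(E3), the candidates $(\sqcup S)(x)=\bigvee_{g}\big(S(g)\wedge g(x)\big)$ and $(\sqcap S)(x)=\bigwedge_{g}\big(S(g)\ra g(x)\big)$, and the residuation checks of (J1)--(J2) and (M1)--(M2) (with the frame law invoked exactly where it is needed, in (J2)) all go through. Note, however, that the paper itself offers no proof of this statement --- it is quoted from Zhang--Xie--Fan \cite[Thm.\ 2.20]{ZXF} --- so there is nothing internal to compare against; your write-up is the standard direct verification of that cited result and can stand on its own.
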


Suppose that $X$ and $Y$ are two sets. For each mapping $f:X\ra Y$,
we have a mapping $f^{\ra}:L^X\ra L^Y$, defined by
$$(\forall y\in Y)(\forall A\in L^X) \Big(f^{\ra}(A)(y)=\bigvee\{A(x)|\ x\in X, \ f(x)=y\}\Big).$$
For simplicity, for any $A\in L^X$, we use $f(A)$ instead of $f^{\ra}(A)$.

\begin{defn}\cite{YSI}
Let $(P;e)$ be an $L$-ordered set. An element $x\in P$ is called a {\it maximal (or minimal)}
element of $A\in L^P$, in symbols $x=\max A$ (or $x=\min A$), if $A(x)=1$ and for all $y\in P$,
$A(y)\leq e(y,x)$ (or $A(y)\leq e(x,y)$). It is easy to see that if A has a maximal (or minimal)
element, then it is unique.
\end{defn}

\begin{defn}\cite{LaiZhang,YSI}
Let $(X;e)$ be an $L$-ordered set. An element $D\in L^X$ is called a {\it fuzzy directed subset} of $(P;e)$ if
\begin{itemize}
\item[{\rm (FD1)}] $\bigvee_{x\in X} D(x)=1$;
\item[{\rm (FD2)}] for all $x,y\in X$, $D(x)\wedge D(y)\leq \bigvee_{z\in X} \big(D(z)\wedge e(x,z)\wedge e(y,z)\big)$.
\end{itemize}
An $L$-ordered set $(X;e)$ is called a {\it fuzzy dcpo}
if every fuzzy directed subset of $(X;e)$ has a join.
\end{defn}

\section{ {\bf Fixpoints on $L$-complete lattices} }

In this section, monotone maps on $L$-ordered set play an important role. Thus, in Theorem \ref{Example}, we propose a
procedure for constructing monotone maps. Then we show that every $L$-complete lattice is, up to isomorphism,
an $L$-complete lattice of fixpoints. We find a relation between least and greatest fixpoints of a monotone map
$f$ on an $L$-complete lattice $(P;e)$ with join and meet with some special elements of $L^P$. Finally, we present a fuzzy version of monotonicity, rolling, fusion  and exchange rules on $L$-complete lattices.

\begin{lem}\label{lem: 1}
Let $(L;\vee,\wedge)$ be a frame and $a_i,b_i\in L$ for all $i\in I$. Then
$$\bigwedge_{i\in I}(a_i\ra b_i)\leq (\bigvee_{i\in I}a_i)\ra (\bigvee_{i\in I}b_i).$$
\end{lem}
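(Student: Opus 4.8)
The plan is to use the residuation property of the relative pseudocomplement in a frame: for $x,y,z\in L$ one has $x\wedge z\leq y$ if and only if $z\leq x\ra y$. (This is immediate from the definition of $a\ra b$ as the largest element with $a\wedge(a\ra b)\leq b$, together with $L$ being a frame.) Put $c:=\bigwedge_{i\in I}(a_i\ra b_i)$, $a:=\bigvee_{i\in I}a_i$ and $b:=\bigvee_{i\in I}b_i$. By residuation, the inequality to be proved, namely $c\leq a\ra b$, is equivalent to $a\wedge c\leq b$, so it suffices to establish the latter.

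For this I would apply the infinite distributive law, which is available since $L$ is a frame:
$$a\wedge c=\Bigl(\bigvee_{i\in I}a_i\Bigr)\wedge c=\bigvee_{i\in I}(a_i\wedge c).$$
Fix $i\in I$. From $c\leq a_i\ra b_i$ we get $a_i\wedge c\leq a_i\wedge(a_i\ra b_i)\leq b_i\leq b$, where the middle inequality is precisely the defining property of $a_i\ra b_i$. Taking the join over $i$ now yields $a\wedge c=\bigvee_{i\in I}(a_i\wedge c)\leq b$, which is what was needed.

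There is essentially no obstacle here; the only points requiring care are that the infinite distributive law must be used in the correct direction (distributing the single element $c$ over the join $\bigvee_{i\in I}a_i$) and that residuation is applied both to reduce the goal and to bound each $a_i\wedge c$. An equally short alternative avoids explicit distributivity and uses the listed properties directly: from $b_i\leq b$ one gets $a_i\ra b_i\leq a_i\ra b$ for each $i$ (again by residuation, since $a_i\wedge(a_i\ra b_i)\leq b_i\leq b$), whence, using property (iii),
$$\bigwedge_{i\in I}(a_i\ra b_i)\leq\bigwedge_{i\in I}(a_i\ra b)=\Bigl(\bigvee_{i\in I}a_i\Bigr)\ra b=\Bigl(\bigvee_{i\in I}a_i\Bigr)\ra\Bigl(\bigvee_{i\in I}b_i\Bigr).$$
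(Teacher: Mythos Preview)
Your main argument is correct and coincides with the paper's proof: set $u=\bigwedge_{i}(a_i\ra b_i)$, use $u\leq a_i\ra b_i$ to get $u\wedge a_i\leq b_i$, apply the infinite distributive law to obtain $u\wedge\bigvee_i a_i\leq\bigvee_i b_i$, and conclude by residuation. The alternative you sketch via property~(iii) is also valid and equally short, but the paper uses only the first approach.
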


\begin{proof}
Let $u:=\bigwedge_{i\in I}(a_i\ra b_i)$. Then $u\leq a_i\ra b_i$ for all $i\in I$, so $u\wedge a_i\leq b_i$ for all $i\in I$.
It follows that $u\wedge \bigvee_{i\in I} a_i=\bigvee_{i\in I}(u\wedge a_i)\leq \bigvee_{i\in I}b_i$ and hence
$u\leq (\bigvee_{i\in I}a_i)\ra (\bigvee_{i\in I}b_i)$.
\end{proof}

In Theorem \ref{Example} and  Corollary \ref{cor: 3.2}, we will use the following proposition that has been stated by W. Yao \cite[Prop. 3.16]{YSI}.
We did not find a proof for it, also we could not prove it. In Proposition \ref{modify}, we will prove the proposition with an additional condition (indeed, we add ``$\sqcup S$ {\rm (resp. $\sqcap S$)} exist'' 
in the statement of \cite[Prop. 3.16]{YSI}). 

\begin{prop}\label{modify}
Let $(P;e)$ be an $L$-ordered set, $S\in L^P$ and $\sqcup S$ {\rm (resp. $\sqcap S$)} exist.
Then $a=\max S$ {\rm(resp. $a'=\min S$)} if and only if $S(a)=1$ and
$a=\sqcup S$ {\rm (resp. $S(a')=1$ and $a'=\sqcap S$)}.
\end{prop}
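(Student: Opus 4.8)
The plan is to prove the ``max'' version; the ``min'' version then follows by the obvious dualization (replacing $e$ by its transpose and $\sqcup$ by $\sqcap$). So I would fix an $L$-ordered set $(P;e)$, a fuzzy subset $S\in L^P$ whose join $\sqcup S$ exists, and prove the equivalence
\[
a=\max S \iff \big(S(a)=1 \text{ and } a=\sqcup S\big).
\]

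For the forward direction, assume $a=\max S$. By definition this already gives $S(a)=1$, so it remains to check that $a$ satisfies (J1) and (J2). Condition (J1), $S(x)\le e(x,a)$ for all $x$, is exactly the second clause in the definition of $\max$. For (J2) I must show $\bigwedge_{y\in P}\big(S(y)\ra e(y,x)\big)\le e(a,x)$ for every $x$. The key observation is that $S(a)=1$, so instantiating the infimum at $y=a$ gives $\bigwedge_{y}(S(y)\ra e(y,x))\le S(a)\ra e(a,x)=1\ra e(a,x)=e(a,x)$, which is (J2). Hence $a=\sqcup S$.

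For the converse, assume $S(a)=1$ and $a=\sqcup S$. I need $A:=S$ to have $a$ as maximal element, i.e. $S(a)=1$ (given) and $S(y)\le e(y,a)$ for all $y$; but the latter is precisely (J1) for the join $a=\sqcup S$. So $a=\max S$. The whole argument is short and essentially bookkeeping; the only ``content'' is the trick of evaluating the infimum in (J2) at the point $y=a$, which is legitimate exactly because the hypothesis supplies $S(a)=1$ — this is also the reason the extra hypothesis ``$\sqcup S$ exists together with $S(a)=1$'' (as opposed to merely ``$\sqcup S$ exists'') is what makes the statement go through, and presumably why Yao's unconditioned version resisted proof. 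I do not anticipate a genuine obstacle; the care needed is only in making sure the $\max$/$\min$ definitions and the $\sqcup$/$\sqcap$ definitions (J1)–(J2)/(M1)–(M2) are quoted in the right direction when dualizing.
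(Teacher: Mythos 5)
Your proof is correct, and it takes a slightly but genuinely different route from the paper's. The paper's forward direction works with the hypothesized join $b=\sqcup S$: from (J1) and $S(a)=1$ it gets $e(a,b)=1$; from Theorem \ref{join and meet}(i) and the inequality $S(y)\leq e(y,a)$ in the definition of $\max$ it gets $e(b,a)=\bigwedge_{y\in P}(S(y)\ra e(y,a))=1$; and it concludes $a=b$ by antisymmetry (E3). You instead verify directly that $a$ itself satisfies (J1) and (J2), the only nontrivial step being the instantiation of the infimum in (J2) at $y=a$, where $S(a)\ra e(a,x)=1\ra e(a,x)=e(a,x)$. The computations overlap, but the logical structure differs: your forward direction never invokes the standing assumption that $\sqcup S$ exists — it \emph{produces} the join. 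Consequently your argument proves Yao's original, unconditioned Proposition 3.16 (whose ``only if'' direction then asserts in particular that $\sqcup S$ exists whenever $\max S$ does), the very statement the authors say they could not prove and which led them to add the hypothesis that $\sqcup S$ exists; your computation shows that added hypothesis is superfluous. One small correction to your closing remark: the clause $S(a)=1$ is already part of Yao's formulation, so the authors' only addition is the existence of the join, and that is exactly the part your argument renders unnecessary. The converse direction (immediate from (J1)) and the dualization to $\min$/$\sqcap$ are as routine as you indicate.
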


\begin{proof}
Let $a=\max S$ and $b=\sqcup S$ for some $b\in P$. Then
$S(a)=1$ and $S(y)\leq e(y,a)$ for all $y\in P$.
By (J1), $1=S(a)\leq e(a,b)$. Also, by (J2),
$e(b,a)=\bigwedge_{y\in P}\big(S(y)\ra e(y,a) \big)=\bigwedge_{y\in P}1=1$, so $a=b$. The other part can be proved in a similar way.
\end{proof}

\begin{thm}\label{Example}
Let $(P;e)$ be an $L$-ordered set, $(Q;e')$ be an $L$-complete lattice and $f:P\ra Q$ be a map.
For each $x\in P$, we define $S_x:Q\ra L$, by $S_x(y)=\bigvee_{\{z\in P| f(z)=y\}}e(z,x)$. Let
$F:P\ra Q$ be defined by $F(a)=\sqcup S_a$ for all $a\in P$. Then $F$ is monotone.
Moreover, $f$ is monotone if and only if
$f=F$.
\end{thm}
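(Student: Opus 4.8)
The plan is to split the statement into two parts: first that $F$ is always monotone, then the equivalence $f$ monotone $\iff f=F$. For monotonicity of $F$, I would fix $a,b\in P$ and try to show $e(a,b)\leq e'(F(a),F(b))=e'(\sqcup S_a,\sqcup S_b)$. By Theorem~\ref{join and meet}(i), $e'(\sqcup S_a,y)=\bigwedge_{y'\in Q}(S_b(y')\ra e'(y',y))$ is not directly what I want; instead I would use the characterization of $\sqcup$ together with Proposition~\ref{YSI Prop 3.7} to reduce $e'(\sqcup S_a,\sqcup S_b)$ to a statement about the fuzzy sets $S_a,S_b$. The natural route: show $S_a(y)\wedge e(a,b)\leq \downarrow S_b(y)$ for every $y\in Q$ (where $\downarrow$ is the down-closure from the Definition block), since $\sqcup S_b=\sqcup{\downarrow}S_b$ and joins are monotone with respect to the pointwise order on fuzzy sets. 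Unwinding $S_a(y)=\bigvee_{f(z)=y}e(z,a)$ and using $e(z,a)\wedge e(a,b)\leq e(z,b)$ (axiom (E2)) gives $S_a(y)\wedge e(a,b)\leq \bigvee_{f(z)=y}e(z,b)=S_b(y)\leq{\downarrow}S_b(y)$, and then Lemma~\ref{lem: 1} or a direct computation with (J1)/(J2) upgrades this pointwise domination to $e'(\sqcup S_a,\sqcup S_b)\geq e(a,b)$.

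For the equivalence, the $(\Leftarrow)$ direction is immediate: if $f=F$ then $f$ is monotone because $F$ is. For $(\Rightarrow)$, assume $f$ monotone; I must show $\sqcup S_a=f(a)$ for every $a$. I would verify that $f(a)=\max S_a$ and then invoke Proposition~\ref{modify} (this is exactly why the authors inserted the ``$\sqcup S$ exists'' hypothesis there, and here $\sqcup S_a$ exists since $Q$ is $L$-complete). To get $f(a)=\max S_a$, first note $S_a(f(a))=\bigvee_{f(z)=f(a)}e(z,a)\geq e(a,a)=1$ by taking $z=a$, so $S_a(f(a))=1$. Second, for arbitrary $y\in Q$ I need $S_a(y)\leq e'(y,f(a))$, i.e. $\bigvee_{f(z)=y}e(z,a)\leq e'(y,f(a))$; it suffices that $e(z,a)\leq e'(f(z),f(a))$ for each $z$ with $f(z)=y$, which is precisely monotonicity of $f$. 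Hence $f(a)=\max S_a$, so by Proposition~\ref{modify}, $f(a)=\sqcup S_a=F(a)$.

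The main obstacle I anticipate is the first part — getting $e(a,b)\leq e'(\sqcup S_a,\sqcup S_b)$ cleanly. The pointwise inequality $S_a(y)\wedge e(a,b)\leq S_b(y)$ is easy, but converting ``$S_a$ is pointwise below $S_b$ after meeting with the scalar $e(a,b)$'' into an inequality between the joins requires care: one wants a lemma of the form ``if $A(y)\wedge t\leq B(y)$ for all $y$ then $t\leq e'(\sqcup A,\sqcup B)$.'' I would prove this on the spot from (J1) and (J2): from $A(y)\wedge t\leq B(y)\leq e'(y,\sqcup B)$ one gets $A(y)\leq t\ra e'(y,\sqcup B)$, hence $t\wedge A(y)\leq e'(y,\sqcup B)$ again, and then feed $\bigwedge_{y}(A(y)\ra e'(y,\sqcup B))$ through (J2) for $\sqcup A$; combined with $t\leq \bigwedge_y(A(y)\ra e'(y,\sqcup B))$ this yields $t\leq e'(\sqcup A,\sqcup B)$. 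Everything else is a routine application of the frame identities (i)--(iii) and the axioms (E1)--(E3).
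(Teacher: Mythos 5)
Your proof is correct, and its overall architecture matches the paper's: monotonicity of $F$ first, then the forward implication by showing $f(a)=\max S_a$ (via $S_a(f(a))=1$ and $S_a(y)\leq e'(y,f(a))$ from monotonicity) and invoking Proposition~\ref{modify}; the second half is essentially identical to what the authors do. Where you diverge is in the first half, and your route is arguably cleaner. The paper establishes $e(a,b)\leq\bigwedge_{y\in Q}\bigl(S_a(y)\ra S_b(y)\bigr)\leq e'(\sqcup S_a,\sqcup S_b)$ by restricting the meet to $y\in f(P)$, applying Lemma~\ref{lem: 1} to the joins defining $S_a(y)$ and $S_b(y)$, and finishing with Proposition~\ref{YSI Prop 3.7}; this is the most delicate computation in their proof (and, as printed, contains a typo writing $e(w,a)$ where $e(w,b)$ is meant). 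You instead prove the pointwise scalar inequality $S_a(y)\wedge e(a,b)\leq S_b(y)$ directly from (E2) and frame distributivity, and then transfer it to the joins via the Heyting adjunction together with (J1) and Theorem~\ref{join and meet}(i) --- your ``if $A(y)\wedge t\leq B(y)$ for all $y$ then $t\leq e'(\sqcup A,\sqcup B)$'' lemma, whose on-the-spot proof is sound. By adjointness the two computations carry the same content, but yours avoids both Lemma~\ref{lem: 1} and Proposition~\ref{YSI Prop 3.7} and sidesteps the bookkeeping over the fibers of $f$; the detour through $\downarrow S_b$ that you mention is unnecessary but harmless, as is the immaterial typo $S_b$ for $S_a$ in your second sentence.
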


\begin{proof}
First we show that $F$ is monotone, that is $e(a,b)\leq e'(F(a),F(b))$ for all $a,b\in P$.
Put $a,b\in P$. Set $u_a=\sqcup S_a$ and $u_b=\sqcup S_b$. By Theorem \ref{join and meet}, for all $x\in Q$,
we have
$$e'(u_a,x)=\bigwedge_{y\in Q}\big(S_a(y)\ra e'(y,x) \big),\quad   e'(u_b,x)=\bigwedge_{y\in Q}\big(S_b(y)\ra e'(y,x) \big).$$
Hence,
\begin{eqnarray}
\label{Example 1}e'(u_a,u_b)=\bigwedge_{y\in Q}\big(S_a(y)\ra e'(y,u_b) \big)\\
\label{Example 2} S_b(y)\leq e'(y,u_b) \mbox{ for all $y\in Q$. }
\end{eqnarray}
From (\ref{Example 2}), it follows that
$S_a(y)\ra S_b(y)\leq S_a(y)\ra e'(y,u_b)$ for all $y\in Q$ and so by (\ref{Example 1}),
$\bigwedge_{y\in Q}\big(S_a(y)\ra S_b(y)\big) \leq \bigwedge_{y\in Q}\big( S_a(y)\ra e'(y,u_b) \big)=e'(u_a,u_b)$.
Also,
\begin{eqnarray*}
\bigwedge_{y\in Q}\big(S_a(y)\ra S_b(y)\big)&=&\bigwedge_{y\in f(P)}\big(S_a(y)\ra S_b(y)\big), \mbox{ since $S(a)(y)=0$, for all $y\in Q-f(P)$} \\
&=& \bigwedge_{y\in P}\big(S_a(f(y))\ra S_b(f(y))\big)\\
&=& \bigwedge_{y\in P}\Big( (\bigvee_{f(z)=f(y)}e(z,a))\ra (\bigvee_{f(w)=f(y)}e(w,a)) \Big)\\
&\geq & \bigwedge_{y\in P} \Big( \bigwedge_{f(z)=f(y)} (e(z,a)\ra e(z,b))\Big), \mbox{ by Lemma \ref{lem: 1} } \\
&=& \bigwedge_{z\in P} \big( e(z,a)\ra e(z,b) \big) \\
&=& e(a,b), \mbox{ by Proposition \ref{YSI Prop 3.7}}
\end{eqnarray*}
so $F:(P;e)\ra (Q;e')$ is monotone. Now, we show that, if $f$ is monotone, then $f=F$. Suppose that $f:(P;e)\ra (Q;e')$
is monotone. Put $a\in P$. For all $y\in Q$, $S_a(y)=\bigvee_{\{z\in P|\ f(z)=y\}}e(z,a)$, so
$S_a(f(a))=1$. Since $f$ is monotone, $f(z)\in y$ implies that $e(z,a)\leq e'(f(z),f(a))=e'(y,f(a))$, hence
$S_a(y)=\bigvee_{\{z\in P|\ f(z)=y\}}e(z,a)\leq e'(y,f(a))$. By Proposition \ref{modify}, $\sqcup S_a=f(a)$ and so
$F(a)=f(a)$ for all $a\in P$.
\end{proof}

\begin{lem}\label{3.1}
Suppose that $(P;e)$ is an $L$-ordered set and $S:P\ra L$ is a map such that $\sqcup S$
{\em ($\sqcap S$)} exists. Then for each $x\in P$, $S(x)=1$
implies that $x\leq \sqcup S$ {\em($\sqcap S\leq x$)}.
\end{lem}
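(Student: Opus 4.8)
The plan is simply to unwind the definitions. Recall that for elements $u,v\in P$, the relation $u\leq v$ (that is, $u\leq_e v$) means precisely $e(u,v)=1$; hence to prove $x\leq\sqcup S$ it suffices to show $e(x,\sqcup S)=1$, and to prove $\sqcap S\leq x$ it suffices to show $e(\sqcap S,x)=1$.

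First I would invoke condition (J1) from the definition of a join, applied with $x_0=\sqcup S$ (which exists by hypothesis): for every $y\in P$ one has $S(y)\leq e(y,\sqcup S)$; in particular $S(x)\leq e(x,\sqcup S)$. The assumption $S(x)=1$ then forces $e(x,\sqcup S)=1$, i.e. $x\leq\sqcup S$. For the parenthetical statement, the same argument with condition (M1) in place of (J1) applied to $x_0=\sqcap S$ gives $S(x)\leq e(\sqcap S,x)$, so $S(x)=1$ yields $e(\sqcap S,x)=1$, i.e. $\sqcap S\leq x$. There is no genuine obstacle here; the only point that must be kept in mind is that the existence of $\sqcup S$ (resp. $\sqcap S$) is part of the hypothesis, which is exactly what makes (J1) (resp. (M1)) applicable.
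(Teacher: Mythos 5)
Your proof is correct and follows essentially the same route as the paper: both arguments reduce to the inequality $S(x)\leq e(x,\sqcup S)$ (resp. $S(x)\leq e(\sqcap S,x)$) and then use $S(x)=1$; the only cosmetic difference is that you cite (J1)/(M1) directly from the definition of join/meet, while the paper first recovers that inequality from Theorem \ref{join and meet}.
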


\begin{proof}
Let $a=\sqcup S$ and $u\in P$ such that $S(u)=1$. Then by Theorem \ref{join and meet}, $e(a,x)=\bigwedge_{y\in P}\big( S(y)\ra e(y,x)\big)$ for all $x\in P$, hence $S(y)\leq e(y,a)$ for all $y\in P$. It follows that
$1=S(u)\leq e(u,a)$ and so $u\leq a$. The proof of the other part is similar.
\end{proof}

Suppose that $(P;e)$ is an $L$-ordered set and $f:P\ra P$ is a map. Define three maps
$S_f:P\ra L$, $T_f:P\ra L$ and $M_f:P\ra L$ by $S_f(x)=e(f(x),x)$, $T_f(x)=e'(x,f(x))$ and
$M_f(x)=S_f(x)\wedge T_f(x)$ for all $x\in P$. Moreover, by $Fix(f)$ we denote the set of all fixpoints of $f$, that is,
$Fix(f)=\{x\in P|\ f(x)=x\}$ and every point $x \in Fix(f)$ is said to be a {\it fixpoint} of $f$.

Consider the assumptions of Theorem \ref{Example}. Let $Hom(P,Q)$ be the set of all monotone maps from $P$ to $Q$. Clearly,
$(Q^P;\varepsilon')$ and $(Hom(P,Q);\varepsilon')$ are $L$-ordered sets, where $\varepsilon'(\alpha,\beta)=
\bigwedge_{x\in P}e'(\alpha(x),\beta(x))$ for all $\alpha,\beta\in Q^P$. It can be easily seen that, the map,
$\phi:Q^P\ra Hom(P,Q)$, sending $f$ to $F$
(see the notations in Theorem \ref{Example}) is a monotone map and $Fix(\phi)=Hom(P,Q)$.

\begin{thm}\label{3.2}
Let $(P;e)$ be an $L$-complete lattice and $f:(P;e)\ra (P;e)$ be a monotone map.
Then $\sqcap S_f$ and $\sqcup T_f$ are fixpoints of $f$. Indeed, $\sqcap S_f$ is the least fixpoint and
$\sqcup T_f$ is the greatest fixpoint of $f$.
\end{thm}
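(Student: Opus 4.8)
The plan is to mimic the classical Tarski/Knaster argument inside the $L$-fuzzy setting, using the characterizations of $\sqcup$ and $\sqcap$ from Theorem \ref{join and meet} and the basic lemmas above. Set $a=\sqcap S_f$, which exists because $(P;e)$ is an $L$-complete lattice. The first step is to show $f(a)\leq a$ and $a\leq f(a)$, i.e.\ $e(a,f(a))=1=e(f(a),a)$, whence $a=f(a)$ by (E3). For $e(a,f(a))=1$: by Theorem \ref{join and meet}(ii) applied to $S_f$, $e(x,a)=\bigwedge_{y\in P}\big(S_f(y)\ra e(x,y)\big)$ for every $x\in P$; I would like to feed $x=f(a)$ into this, so the crux is to bound $S_f(y)$ below $e(f(a),y)$ for all $y$. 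The natural route: from the definition $e(x,a)=\bigwedge_y(S_f(y)\ra e(x,y))$ one gets $S_f(y)\leq e(x,y)\vee(\text{stuff})$; more usefully, (M1) for a meet gives directly $S_f(y)\leq e(a,y)$ for all $y$. Now use monotonicity of $f$: $e(a,y)\leq e(f(a),f(y))$, so $S_f(y)=e(f(y),y)\wedge\cdots$ — here I combine $S_f(y)\leq e(a,y)\leq e(f(a),f(y))$ with $S_f(y)=e(f(y),y)$ to get $S_f(y)\leq e(f(a),f(y))\wedge e(f(y),y)\leq e(f(a),y)$ by (E2). Hence $S_f(y)\leq e(f(a),y)$ for every $y\in P$, and plugging $x=f(a)$ into the (M2)-type identity yields $e(f(a),a)=\bigwedge_y\big(S_f(y)\ra e(f(a),y)\big)=1$, i.e.\ $f(a)\leq a$.

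The second step uses $f(a)\leq a$ to get the reverse inequality. Since $f$ is monotone and $e(f(a),a)=1$, we obtain $e(f(f(a)),f(a))=1$, i.e.\ $1=e\big(f(f(a)),f(a)\big)=S_f(f(a))$. By Lemma \ref{3.1} (the $\sqcap$ version), $S_f(f(a))=1$ forces $a=\sqcap S_f\leq f(a)$, i.e.\ $e(a,f(a))=1$. Combining the two steps with (E3) gives $f(a)=a$, so $\sqcap S_f$ is a fixpoint.

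Third, I must show it is the \emph{least} fixpoint. Let $c$ be any fixpoint, $f(c)=c$. Then $S_f(c)=e(f(c),c)=e(c,c)=1$, so again by Lemma \ref{3.1} we get $\sqcap S_f\leq c$; since this holds for every fixpoint, $a=\sqcap S_f$ is the least element of $Fix(f)$. The argument for $\sqcup T_f$ being the greatest fixpoint is completely dual: put $b=\sqcup T_f$, use Theorem \ref{join and meet}(i) for $T_f$, note $T_f(y)=e(y,f(y))\leq e(y,b)$ by (J1), apply monotonicity $e(y,b)\leq e(f(y),f(b))$ and (E2) to get $T_f(y)\leq e(y,f(b))$ for all $y$, hence $e(b,f(b))=1$ from the (J2) identity; then $e(b,f(b))=1$ and monotonicity give $e(f(b),f(f(b)))=1=T_f(f(b))$, so $f(b)\leq b$ by Lemma \ref{3.1} ($\sqcup$ version), giving $f(b)=b$; finally any fixpoint $c$ satisfies $T_f(c)=1$, so $c\leq\sqcup T_f$.

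The step I expect to be the main obstacle is establishing $S_f(y)\leq e(f(a),y)$ for all $y$ cleanly — i.e.\ correctly chaining the inequality $S_f(y)\le e(a,y)$ (from (M1) for $\sqcap S_f$), monotonicity $e(a,y)\le e(f(a),f(y))$, the defining equality $S_f(y)=e(f(y),y)$, and transitivity (E2) in the right order so that the auxiliary element $f(y)$ is eliminated; everything else is a routine application of Theorem \ref{join and meet} and Lemma \ref{3.1}, together with the purely formal dualization for $\sqcup T_f$.
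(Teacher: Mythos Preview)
Your argument is correct and essentially identical to the paper's: the paper carries out the $\sqcup T_f$ case explicitly (using (J1), monotonicity, (E2) to get $T_f(y)\le e(y,f(b))$, then Theorem~\ref{join and meet} and Lemma~\ref{3.1}) and declares the $\sqcap S_f$ case ``similar'', while you do the $\sqcap S_f$ case explicitly and dualize; the chain $S_f(y)\le e(a,y)\le e(f(a),f(y))$ combined with $S_f(y)=e(f(y),y)$ and (E2) is exactly the dual of the paper's key step. One cosmetic slip: after announcing ``For $e(a,f(a))=1$:'' you in fact first establish $e(f(a),a)=1$, but the mathematics is unaffected.
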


\begin{proof}
Let $\sqcup T_f=a$ and $\sqcap S_f=b$. Then $e(a,x)=\bigwedge_{y\in P}\big(T_f(y)\ra e(y,x)\big)$ and
$e(x,b)=\bigwedge_{y\in P}\big(S_f(y)\ra e(x,y)\big)$ for all $x\in P$ and so
$T_f(y)\leq e(y,a)$ for all $x\in P$. Since $f$ is a monotone map, then $e(y,a)\leq e(f(y),f(a))$ and hence
$T_f(y)\leq e(y,f(y))\wedge e(f(y),f(a))\leq e(y,f(a))$ for all $y\in P$. Thus by Theorem \ref{join and meet},
$e(a,f(a))=\bigwedge_{y\in P}\big(T_f(y)\ra e(y,f(a) \big)=1$. Also, $1=e(a,f(a))\leq e(f(a),f(f(a)))=T_f(f(a))$, so
by Lemma \ref{3.1}, $f(a)\leq a$. Therefore, $f(a)=a$ and $a$ is a fixpoint of $f$. Now, let $u$ be another fixpoint of $f$, then $1=e(u,f(u))=T_f(u)$, hence by Lemma \ref{3.1}, $u\leq a$, whence $a$ is the greatest
fixpoint of $f$. By a similar way, we can show that $b$ is the least fixpoint of $f$.
\end{proof}

\begin{cor}\label{cor: 3.2}
Let $(P;e)$ be an $L$-complete lattice and $f:(P;e)\ra (P;e)$ be a monotone map. Then
$\max T_f$ and $\min S_f$ exist and $\max T_f=\sqcup T_f=\sqcup M_f$ and $\min S_f=\sqcap S_f=\sqcap M_f$.
\end{cor}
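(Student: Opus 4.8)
The plan is to deduce everything from Theorem~\ref{3.2} together with Proposition~\ref{modify}, exploiting the fact that in an $L$-complete lattice every join and every meet exists, so the extra hypothesis in Proposition~\ref{modify} is automatically met. Write $a:=\sqcup T_f$ and $b:=\sqcap S_f$; by Theorem~\ref{3.2} these are, respectively, the greatest and the least fixpoint of $f$, so $f(a)=a$ and $f(b)=b$.

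First I would handle $\max T_f$ and $\min S_f$. Since $f(a)=a$, we get $T_f(a)=e(a,f(a))=e(a,a)=1$; as $\sqcup T_f$ exists and equals $a$ with $T_f(a)=1$, Proposition~\ref{modify} yields $a=\max T_f$, hence $\max T_f=\sqcup T_f$. Dually, $S_f(b)=e(f(b),b)=e(b,b)=1$, and Proposition~\ref{modify} gives $b=\min S_f$, so $\min S_f=\sqcap S_f$.

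Next I would identify the join and the meet of $M_f$. From $f(a)=a$ we also have $S_f(a)=e(f(a),a)=1$, hence $M_f(a)=S_f(a)\wedge T_f(a)=1$; similarly $M_f(b)=1$. Moreover $M_f\leq T_f$ and $M_f\leq S_f$ pointwise. By condition (J1) for the join $a=\sqcup T_f$ we have $T_f(y)\leq e(y,a)$ for every $y\in P$, so $M_f(y)\leq T_f(y)\leq e(y,a)$ for all $y$; together with $M_f(a)=1$ this says $a=\max M_f$, and since $\sqcup M_f$ exists, Proposition~\ref{modify} forces $a=\sqcup M_f$. Symmetrically, condition (M1) for $b=\sqcap S_f$ gives $S_f(y)\leq e(b,y)$ for all $y$, whence $M_f(y)\leq S_f(y)\leq e(b,y)$, so $b=\min M_f=\sqcap M_f$. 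Combining, $\max T_f=\sqcup T_f=\sqcup M_f$ and $\min S_f=\sqcap S_f=\sqcap M_f$.

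There is no serious obstruction here: the only point that needs care is that Proposition~\ref{modify} requires the relevant join (resp. meet) to exist before one may promote ``a $\sqcup$/$\sqcap$ attained with value $1$'' to a genuine maximum/minimum, and this is exactly what $L$-completeness of $(P;e)$ supplies for $T_f$, $S_f$ and $M_f$. If anything, one should double-check that $M_f(a)=1$ genuinely uses \emph{both} halves of $f(a)=a$ (that is, that $a$ is a fixpoint, not merely a pre- or post-fixpoint), which is precisely the content of Theorem~\ref{3.2}.
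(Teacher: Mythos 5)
Your proof is correct and follows essentially the same route as the paper: both deduce $\max T_f=\sqcup T_f$ and $\min S_f=\sqcap S_f$ from Theorem~\ref{3.2} together with Proposition~\ref{modify}, then verify $M_f(a)=M_f(b)=1$ and use $M_f\leq S_f$, $M_f\leq T_f$ with (J1)/(M1) to identify $\max M_f$ and $\min M_f$ before converting back to $\sqcup M_f$ and $\sqcap M_f$. If anything, your version is slightly more careful than the paper's (which contains a typo writing $e(y,a)$ where $e(a,y)$ and $e(y,b)$ are meant) in spelling out that $L$-completeness supplies the existence hypothesis needed for Proposition~\ref{modify}.
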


\begin{proof}
By Theorem \ref{3.2} and Proposition \ref{modify}, it can be easily obtained that
$\max T_f=\sqcup T_f$ and $\min S_f=\sqcap S_f$. Let $a=\min S_f$ and $b=\max T_f$.
By Theorem \ref{3.2}, $M_f(a)=1=M_f(b)$. Also, for all $y\in P$, we have
$M_f(y)\leq S_f(y)\leq e(y,a)$ and $M_f(y)\leq T_f(y)\leq e(y,a)$, so by definition,
$\min M_f=a$ and $\max M_f=b$. Now, from \cite[Prop. 3.16]{YSI} we conclude that
$\sqcup M_f=b$ and $\sqcap M_f=a$.
\end{proof}

By \cite[Thm. 2.29]{ZXF}, we know that, if $(X;e)$ is an $L$-complete lattice and $f:X\ra X$ is a monotone map, then
$Fix(f)$ is an $L$-complete lattice. In the next theorem, we will show that each $L$-complete lattice is of this form.
That is, any $L$-complete lattice is $L$-isomorphic to $Fix(f)$ for some suitable monotone map $f$ on a suitable
$L$-complete lattice.

\begin{thm}\label{thm: rep}
Let $(P;e)$ be an $L$-complete lattice. Define $f:(L^P,\tilde{e})\ra (L^P,\tilde{e})$ by $f(S)=\downarrow \sqcup S$ for each
$S\in L^P$. Then $f$ is monotone and there exists an $L$-order isomorphism between
$(Fix(f);\tilde{e})$ and $(P;e)$.
\end{thm}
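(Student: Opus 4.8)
The plan is to realise the isomorphism concretely through the principal down-sets $\downarrow a$. First I would record that, by Theorem \ref{thm: complete}, $(L^P;\tilde e)$ is an $L$-complete lattice, so $\sqcup S$ exists for every $S\in L^P$ and $f$ is well defined; unwinding Definition \ref{down and up}, $f(S)(x)=e(x,\sqcup S)$ for all $x\in P$.

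For the monotonicity of $f$ I would first isolate the auxiliary claim that the join map $S\mapsto\sqcup S$ is monotone from $(L^P;\tilde e)$ to $(P;e)$, i.e.\ $\tilde e(S,T)\le e(\sqcup S,\sqcup T)$. This holds because for each $y\in P$ one has $\tilde e(S,T)\wedge S(y)\le T(y)\le e(y,\sqcup T)$ (using $\tilde e(S,T)\le S(y)\ra T(y)$ and (J1) for $\sqcup T$), hence $\tilde e(S,T)\le S(y)\ra e(y,\sqcup T)$, and taking the meet over all $y$ and invoking Theorem \ref{join and meet}(i) gives $\tilde e(S,T)\le e(\sqcup S,\sqcup T)$. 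Combining this with Proposition \ref{YSI Prop 3.7} yields
$$\tilde e\big(f(S),f(T)\big)=\bigwedge_{x\in P}\big(e(x,\sqcup S)\ra e(x,\sqcup T)\big)=e(\sqcup S,\sqcup T)\ge\tilde e(S,T),$$
so $f$ is monotone.

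Next I would identify $Fix(f)$. On one hand $\sqcup\downarrow a=a$ (Definition \ref{down and up}), so $f(\downarrow a)=\downarrow\sqcup\downarrow a=\downarrow a$, whence $\downarrow a\in Fix(f)$ for every $a\in P$. On the other hand, if $S\in Fix(f)$ then $S=f(S)=\downarrow\sqcup S$ is a principal down-set. Therefore $Fix(f)=\{\downarrow a\mid a\in P\}$, and $S\mapsto\sqcup S$ is a left inverse on $Fix(f)$ of the map $a\mapsto\downarrow a$.

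Finally I would verify that $\varphi\colon(P;e)\to(Fix(f);\tilde e)$, $\varphi(a)=\downarrow a$, is an $L$-order isomorphism. Surjectivity is exactly the description of $Fix(f)$ above. For all $a,b\in P$, Proposition \ref{YSI Prop 3.7} gives $\tilde e(\downarrow a,\downarrow b)=\bigwedge_{x\in P}\big(e(x,a)\ra e(x,b)\big)=e(a,b)$, so $\varphi$ both preserves and reflects the $L$-order; injectivity then follows since $\downarrow a=\downarrow b$ forces $e(a,b)=e(b,a)=1$, whence $a=b$ by (E3). The only step that takes any real thought is the monotonicity of $f$ (equivalently, of the join map); after that, the statement is essentially the observation that $\downarrow$ embeds $(P;e)$ isometrically into $(L^P;\tilde e)$ with image precisely $Fix(f)$.
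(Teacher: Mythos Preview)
Your proposal is correct and follows essentially the same route as the paper: both arguments establish monotonicity by computing $\tilde e(f(S),f(T))=e(\sqcup S,\sqcup T)$ via Proposition~\ref{YSI Prop 3.7} and then bounding $e(\sqcup S,\sqcup T)\ge\tilde e(S,T)$ through $T(y)\le e(y,\sqcup T)$ together with Theorem~\ref{join and meet}(i), and both realise the isomorphism as $a\mapsto\downarrow a$ with surjectivity coming from $S=\downarrow\sqcup S$ for $S\in Fix(f)$. Your presentation is slightly tidier in that you isolate the monotonicity of the join map as a separate claim and spell out injectivity, but the substance is identical.
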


\begin{proof}
By \cite[Thm. 2.29]{ZXF}, $(L^P;\tilde{e})$ is an $L$-complete lattice. First, we show that $f$ is monotone
(clearly, $f$ is well defined). Let $S,T\in L^P$ and $x \in P$. Then by Theorem \ref{join and meet},
\begin{eqnarray}
\label{res1}e(\sqcup S,x)=\bigwedge_{y\in P}\big(S(y)\ra e(y,x) \big), \quad e(\sqcup T,x)=\bigwedge_{y\in P}\big(T(y)\ra e(y,x) \big)
\end{eqnarray}
and so
\begin{eqnarray*}
\tilde{e}(\downarrow \sqcup S,\downarrow \sqcup T)&=&\bigwedge_{y\in P}\big((\downarrow \sqcup S)(y)\ra (\downarrow \sqcup T)(y) \big)
= \bigwedge_{y\in P}\big(e(y,\sqcup S)\ra e(y,\sqcup T)\big)\\
&=& e(\sqcup S,\sqcup T), \mbox{ by Proposition \ref{YSI Prop 3.7}} \\
&\geq &\bigwedge_{y\in P}\big(S(y)\ra e(y,\sqcup T) \big), \mbox{ by (\ref{res1})}.
\end{eqnarray*}
Also, by (\ref{res1}), for all $y\in P$, $T(y)\leq e(y,\sqcup T)$, so
$S(y)\ra T(y)\leq S(y)\ra e(y,\sqcup T)$ for all $y\in P$
which implies that $\tilde{e}(S,T)=\bigwedge_{y\in P}\big(S(y)\ra T(y)\big)\leq \bigwedge_{y\in P}\big(S(y)\ra e(y,\sqcup T)\big)$.
By summing up the above results, it follows that $\tilde{e}(S,T)\leq \tilde{e}(\downarrow \sqcup S,\downarrow \sqcup T)$.
That is, $f:(L^P,\tilde{e})\ra (L^P,\tilde{e})$ is monotone. Define
$\alpha:P\ra Fix(f)$ by $\alpha(x)=\downarrow x$ for all $x\in P$. We know that $(Fix(f);\tilde{e})$ is an $L$-complete
lattice. Clearly, $\alpha$ is one-to-one. Put $S\in Fix(f)$. Then $\downarrow\sqcup S=f(S)=S$, so $S\in Im(\alpha)$.
That is, $\alpha$ is onto. Moreover, by Proposition \ref{YSI Prop 3.7}, for all $a,b\in P$,
$\tilde{e}(\alpha(a),\alpha(b))=\tilde{e}(\downarrow a,\downarrow b)=\bigwedge_{y\in P}\big(e(y,a)\ra e(y,b)\big)=e(a,b)$.
Therefore, $\alpha$ is an $L$-order isomorphism.
\end{proof}

We know that if $(P;\leq)$ is a complete lattice and $f,g:P\ra P$ are two ordered preserving maps such that
$F(x)\leq G(x)$ for all $x\in P$, then $\mu_f\leq \mu_g$, where $\mu_f$ and $\mu_g$ are
the least fixpoints of $f$ and $g$, respectively (see \cite[Section 8]{davey}).
In the next theorem, we generalize this result for $L$-complete lattices.

\begin{thm}\label{3.3} {\rm (Monotonicity rule.)}
Let $(P;e)$ be an $L$-complete lattice and $f,g:(P;e)\ra (P;e)$ be monotone maps such that
$\sqcap S_f=a$ and $\sqcap S_g=b$. Then $\bigwedge_{y\in P} e(f(y),g(y))\leq e(a,b)$.
\end{thm}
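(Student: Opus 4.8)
The plan is to mimic the classical argument: the least fixpoint of $f$ is $a=\sqcap S_f$, so it suffices to show that the single element $b=\sqcap S_g$ already behaves like a "pre-fixpoint" of $f$ in the fuzzy sense, i.e.\ that $S_f(b)$ is large enough to force $a\leq$ something controlled by $b$. More precisely, writing $t:=\bigwedge_{y\in P}e(f(y),g(y))$, I would first use Theorem \ref{3.2} to record that $b$ is a fixpoint of $g$, so $g(b)=b$ and hence $e(f(b),g(b))=e(f(b),b)=S_f(b)$. Therefore $t\leq S_f(b)$. The point is that $b$ is "almost" a point with $S_f(b)=1$, but only to degree $t$, and I must propagate this into a bound on $e(a,b)$.

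Next I would invoke the meet description of $a=\sqcap S_f$ from Theorem \ref{join and meet}(ii): $e(x,a)=\bigwedge_{y\in P}\big(S_f(y)\ra e(x,y)\big)$ for all $x\in P$. Taking $x$ to be $b$ itself does not immediately help; instead the right move is to exploit (M1), $S_f(y)\leq e(a,y)$ for all $y\in P$ — wait, that is the wrong direction. Let me reconsider: from Theorem \ref{join and meet}(ii) with the roles set up for $\sqcap$, one gets $S_f(y)\leq e(a,y)$ is false in general; what one does get is that $a$ is a lower bound in the sense $e(x,a)=\bigwedge_y(S_f(y)\ra e(x,y))$. The cleaner route is the one used in the proof of Theorem \ref{3.2}: since $a=\sqcap S_f$ and $f$ is monotone, $S_f(a)=e(f(a),a)$ and the argument there shows $e(f(a),a)=1$, in fact $f(a)=a$. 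So I would instead aim directly at $e(a,b)$ via (M2) applied to $x=b$: $\bigwedge_{y\in P}\big(S_f(y)\ra e(b,y)\big)\leq e(b,a)$ — again the wrong side.

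The genuinely correct plan: show $t\wedge S_g(y)\leq S_f(y)$ is too strong, so instead I bound $e(a,b)$ through the characterization $e(a,b)=\bigwedge_{z\in P}\big(e(z,a)\ra e(z,b)\big)$ from Proposition \ref{YSI Prop 3.7}, or more efficiently: since $b=g(b)$ and $t\leq e(f(b),g(b))=e(f(b),b)=S_f(b)$, Lemma \ref{3.1} is not applicable because $S_f(b)$ need not equal $1$; but a \emph{graded} version is. I would prove the graded analogue of Lemma \ref{3.1}: if $\sqcap S$ exists then $S(x)\leq e(\sqcap S, x)$ for all $x$ — this is exactly (M1) for the meet. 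Applying (M1) for $S_f$ at the point $b$ gives $S_f(b)\leq e(a,b)$, and combined with $t\leq S_f(b)$ we conclude $t\leq e(a,b)$, which is the claim. So the key steps, in order, are: (1) $b=g(b)$ by Theorem \ref{3.2}; (2) hence $t=\bigwedge_y e(f(y),g(y))\leq e(f(b),g(b))=e(f(b),b)=S_f(b)$; (3) by (M1) for the meet $a=\sqcap S_f$, $S_f(b)\leq e(a,b)$; (4) chain the inequalities. The main obstacle is purely bookkeeping — making sure one uses the $\sqcap$-side axioms (M1)/(M2) with the correct orientation of $e$, since the asymmetry of $e$ makes it easy to write down a plausible-looking but false inequality; once (M1) is identified as "$S(x)\leq e(\sqcap S,x)$" the argument is immediate and no frame computation beyond what Lemma \ref{3.1} already records is needed.
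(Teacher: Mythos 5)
Your final argument is correct: since $b=\sqcap S_g$ is a fixpoint of $g$ by Theorem \ref{3.2}, you get $\bigwedge_{y\in P}e(f(y),g(y))\leq e(f(b),g(b))=e(f(b),b)=S_f(b)$, and (M1) for $a=\sqcap S_f$ gives $S_f(b)\leq e(a,b)$; chaining these proves the claim. This is, however, a genuinely different route from the paper's. The paper never invokes the fixpoint theorem: it works only with the two meet characterizations from Theorem \ref{join and meet}(ii), sets $x=a$ in the description of $e(\cdot,b)$ to get $e(a,b)=\bigwedge_{y}\bigl(S_g(y)\ra e(a,y)\bigr)$, uses (M1) for $S_f$ to replace $e(a,y)$ by the smaller $S_f(y)$, and finishes with the residuation form of transitivity, $e(f(y),g(y))\leq e(g(y),y)\ra e(f(y),y)$. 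A consequence is that the paper's computation does not use monotonicity of $f$ or $g$ at all, so it actually establishes the inequality for arbitrary maps whose $\sqcap S_f$ and $\sqcap S_g$ happen to exist; your proof is shorter and more conceptual (one instantiation at $y=b$ plus (M1)) but genuinely needs $g$ monotone and the completeness hypothesis so that Theorem \ref{3.2} applies. One presentational caveat: the body of your proposal contains several abandoned false starts (e.g.\ the misdirected attempts with (M2)); only the plan in your closing paragraph should survive into a write-up, and the ``graded analogue of Lemma \ref{3.1}'' you mention needs no separate proof, since $S(x)\leq e(\sqcap S,x)$ is literally axiom (M1) (or follows by putting $x=\sqcap S$ in Theorem \ref{join and meet}(ii)).
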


\begin{proof}
Since $\sqcap S_f=a$ and $\sqcap S_g=b$, then for each $x\in P$, we have
\begin{eqnarray}
\label{R1}  e(x,a)=\bigwedge_{y\in P}\big( e(f(y),y)\ra e(x,y)\big),\\
\label{R2}  e(x,b)=\bigwedge_{y\in P}\big( e(g(y),y)\ra e(x,y)\big).
\end{eqnarray}
By (\ref{R2}), $e(a,b)=\bigwedge_{y\in P}\big( e(g(y),y)\ra e(a,y)\big)$. Also, by (\ref{R1}),
$e(f(y),y)\leq e(a,y)$ for all $y\in P$, so $e(g(y),y)\ra e(f(y),y)\leq e(g(y),y)\ra e(a,y)$ for
all $y\in P$ and hence,
$e(a,b)=\bigwedge_{y\in P}\big( e(g(y),y)\ra e(a,y)\big)\geq \bigwedge_{y\in P}\big(e(g(y),y)\ra e(f(y),y)\big)$.
Now, we claim that $e(g(y),y)\ra e(f(y),y)\geq e(f(y),g(y))$ for all $y\in P$.
In order to show that our claim is true, it suffices to prove that
$e(f(y),g(y))\wedge e(g(y),y)\leq e(f(y),y)$, which clearly hold by (E3). Hence, our claim is true and so
$e(a,b)=\bigwedge_{y\in P}\Big( e(g(y),y)\ra e(f(y),y)\Big)\geq \bigwedge_{y\in P}e(f(y),g(y))$.
\end{proof}

\begin{thm}\label{3.5} {\rm (Rolling rule.)}
Let $(P;e)$ and $(Q;e')$ be $L$-complete lattices and $f:(P;e)\ra (Q;e')$, $g:(Q;e')\ra (P;e)$ be monotone maps.
Then the following hold:
\begin{itemize}
\item[{\rm (i)}] $g(\sqcap S_{f\circ g})=\sqcap S_{g\circ f}$.
\item[{\rm (ii)}] $\sqcap (g(S_{f\circ g}))=g(\sqcap S_{g\circ f})$.
\end{itemize}
\end{thm}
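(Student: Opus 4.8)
The plan is first to pin down the objects involved via Theorems \ref{3.2} and \ref{join and meet}. Since a composite of monotone maps is monotone, $f\circ g\colon(Q;e')\to(Q;e')$ and $g\circ f\colon(P;e)\to(P;e)$ are monotone maps on $L$-complete lattices, so by Theorem \ref{3.2} the element $q_0:=\sqcap S_{f\circ g}$ is the least fixpoint of $f\circ g$ and $p_0:=\sqcap S_{g\circ f}$ is the least fixpoint of $g\circ f$; moreover the meet $\sqcap\big(g(S_{f\circ g})\big)$ exists because $P$ is $L$-complete and $g(S_{f\circ g})\in L^P$. Throughout I will use that a monotone map $h$ preserves the induced crisp order, i.e. $e(x,y)=1$ forces $e'(h(x),h(y))=1$.

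For (i) I would run the usual ``rolling'' argument at the level of the posets $(P;\leq_e)$ and $(Q;\leq_{e'})$. First, $g(q_0)$ is a fixpoint of $g\circ f$, since $(g\circ f)(g(q_0))=g\big((f\circ g)(q_0)\big)=g(q_0)$; hence $p_0\leq_e g(q_0)$ because $p_0$ is the least fixpoint of $g\circ f$. Symmetrically, $f(p_0)$ is a fixpoint of $f\circ g$, since $(f\circ g)(f(p_0))=f\big((g\circ f)(p_0)\big)=f(p_0)$; hence $q_0\leq_{e'} f(p_0)$, and applying the monotone map $g$ gives $g(q_0)\leq_e g(f(p_0))=(g\circ f)(p_0)=p_0$. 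The two inequalities together with (E3) yield $g(q_0)=p_0$, which is exactly (i).

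For (ii) I would compute $\sqcap\big(g(S_{f\circ g})\big)$ directly from the characterization of the meet in Theorem \ref{join and meet}(ii), checking that $g(q_0)$ satisfies the required identity. Unfolding $g(S_{f\circ g})(p')=\bigvee_{\{q\,:\,g(q)=p'\}}S_{f\circ g}(q)$ and using the frame law $(\bigvee Y)\ra z=\bigwedge_{y\in Y}(y\ra z)$, one rewrites, for every $p\in P$,
$$\bigwedge_{p'\in P}\Big(g(S_{f\circ g})(p')\ra e(p,p')\Big)=\bigwedge_{q\in Q}\Big(S_{f\circ g}(q)\ra e(p,g(q))\Big),$$
so it remains to see that this equals $e(p,g(q_0))$. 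The direction $\leq$ follows from the single index $q=q_0$, since $q_0$ is a fixpoint of $f\circ g$ and hence $S_{f\circ g}(q_0)=e'(q_0,q_0)=1$. The direction $\geq$ uses Corollary \ref{cor: 3.2} applied to $f\circ g$, namely $\min S_{f\circ g}=q_0$, so $S_{f\circ g}(q)\leq e'(q_0,q)\leq e(g(q_0),g(q))$ by monotonicity of $g$, whence $e(p,g(q_0))\wedge S_{f\circ g}(q)\leq e(p,g(q))$ for every $q$ by (E2). Thus $\sqcap\big(g(S_{f\circ g})\big)=g(q_0)$, which by (i) is $g(\sqcap S_{f\circ g})=\sqcap S_{g\circ f}$, giving (ii).

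I expect the only real obstacle to be the $\geq$ half of the infimum computation in (ii): one must recognize that the term $S_{f\circ g}(q)\ra e(p,g(q))$ is controlled uniformly in $q$ by the fact that $q_0$ is not merely the least fixpoint but also $\min S_{f\circ g}$ (Corollary \ref{cor: 3.2}), which yields $S_{f\circ g}(q)\leq e'(q_0,q)$ and then, after pushing through $g$, reduces the estimate to transitivity (E2). Part (i) and the $\leq$ half are routine, relying only on Theorem \ref{3.2} and the fact that monotone maps preserve $\leq_e$ and $\leq_{e'}$.
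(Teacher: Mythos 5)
Your proof is correct. Part (i) is exactly the paper's argument: the paper simply cites Theorem \ref{3.2} together with the crisp Rolling Rule of Davey--Priestley, and your poset-level computation with $\leq_e$, $\leq_{e'}$ and (E3) is precisely what that citation unpacks to. In part (ii) you and the paper share the key first step, namely unfolding $g(S_{f\circ g})$ and using $(\bigvee Y)\ra z=\bigwedge_{y\in Y}(y\ra z)$ to rewrite $\bigwedge_{p'\in P}\big(g(S_{f\circ g})(p')\ra e(p,p')\big)$ as $\bigwedge_{q\in Q}\big(S_{f\circ g}(q)\ra e(p,g(q))\big)$; after that the two arguments diverge. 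The paper names $b=\sqcap(g(S_{f\circ g}))$, proves $a\leq_e b$ by pushing monotonicity of $g$ through the meet formula, and then proves $e(b,a)=1$ by the specific substitution $y=f(a)$ into $S_{f\circ g}(y)\leq e(b,g(y))$, exploiting $g(f(a))=a$ and $f\circ g(f(a))=f(a)$; antisymmetry finishes. You instead verify the defining identity of Theorem \ref{join and meet}(ii) for the candidate $g(q_0)$ at every $p\in P$: the $\leq$ half by evaluating at the single index $q=q_0$ (where $S_{f\circ g}(q_0)=1$), and the $\geq$ half uniformly in $q$ from $S_{f\circ g}(q)\leq e'(q_0,q)$ (which is (M1) for $\sqcap S_{f\circ g}$, or equivalently $\min S_{f\circ g}=q_0$ from Corollary \ref{cor: 3.2}), monotonicity of $g$, and (E2). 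Your version buys a slightly stronger and more uniform statement --- the full $L$-valued identity $e(p,g(q_0))=\bigwedge_{q\in Q}\big(S_{f\circ g}(q)\ra e(p,g(q))\big)$ rather than just the two crisp inequalities --- and avoids the ad hoc substitution $y=f(a)$; the paper's version is marginally shorter once the crisp rolling rule is taken off the shelf. Both are complete proofs.
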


\begin{proof}
(i) The proof of this part follows from Theorem \ref{3.2}, and the Rolling Rule from \cite[8.29]{davey}.

(ii) Let $\sqcap (g(S_{f\circ g}))=b$ and $a=\sqcap S_{g\circ f}$. Then for each $x\in P$,
\begin{eqnarray}
e(x,b)=\bigwedge_{y\in P}\big(g(S_{f\circ g})(y)\ra e(x,y) \big)&=&
\bigwedge_{y\in P}\big((\bigvee_{\{z\in Q|\ g(z)=y\}}S_{f\circ g}(z))\ra e(x,y) \big)\\
&=& \bigwedge_{y\in P} \bigwedge_{\{z\in Q|\ g(z)=y\}}\big(S_{f\circ g}(z)\ra e(x,y) \big)\\
&=& \label{R3} \bigwedge_{y\in Q}\big( S_{f\circ g}(y)\ra e(x,g(y))\big).
\end{eqnarray}
\begin{eqnarray}
\label{R4} e(x,a)=\bigwedge_{y\in P}\big(S_{g\circ f}(y)\ra e(x,y)\big)=\bigwedge_{y\in P}\big(e(g\circ f(y),y)\ra e(x,y)\big).
\end{eqnarray}
Since $g$ is monotone, by (\ref{R3}) and (\ref{R4}), for each $x\in Q$,
$e(g(x),b)=\bigwedge_{y\in Q} \big(S_{f\circ g}(y)\ra e(g(x),g(y)) \big)\geq
\bigwedge_{y\in Q} \big(S_{f\circ g}(y)\ra e(x,y) \big)=e(x,\sqcap S_{f\circ g})$.
Hence, $e(g(\sqcap S_{f\circ g}),b)\geq e(\sqcap_{f\circ g},\sqcap_{f\circ g})=1$ and so by (i),
$a=\sqcap S_{g\circ f}=g(\sqcap S_{f\circ g})\leq b$.
Moreover, by (\ref{R3}), $e(f\circ g(y),y)=S_{f\circ g}(y)\leq e(b,g(y))$ for all $y\in Q$, so
$e\big(f\circ g(f(a)),f(a)\big)\leq e(b,g(f(a)))$. By Theorem \ref{3.2}, we have $g(f(a))=a$
(since $a=\sqcap S_{gof})$) and $f\circ g(f(a))=f(a)$. It follows that
$1=e(f(a),f(a))=e\big(f\circ g(f(a)),f(a)\big)=e(b,a)$.
Therefore, $a=b$, and so by (i), the proof of this part is completed.
\end{proof}

\begin{thm}\label{3.6} {\rm (Fusion rule.)}
Let $(P;e)$ and $(Q;e')$ be two $L$-complete lattices and let $f:P\ra Q$ possess a right adjoint $f':Q\ra P$.
Let $g:P\ra P$ and $h:Q\ra Q$ be monotone. Then
\begin{itemize}
\item[{\em(i)}]  $\bigwedge_{y\in P}e'(f\circ g(y),h\circ f(y))\leq e'(f(\sqcap S_g),\sqcap S_h)$.
\item[{\em(ii)}]  $e'(h\circ f(\sqcap S_g),f\circ g(\sqcap S_g))\leq e'(\sqcap S_h,f(\sqcap S_g))$.
\end{itemize}
\end{thm}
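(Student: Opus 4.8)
The plan is to handle the two parts separately, writing $a=\sqcap S_g$ and $b=\sqcap S_h$ (both exist since $(P;e)$ and $(Q;e')$ are $L$-complete lattices); by Theorem \ref{3.2} these are the least fixpoints of $g$ and of $h$, so in particular $g(a)=a$ and $h(b)=b$. The two facts I will lean on repeatedly are: the defining condition $e'(f(x),y)=e(x,f'(y))$ of the fuzzy Galois connection $(f,f')$ (so, by Theorem \ref{Galois}, $f$ and $f'$ are monotone), and clause (M1) of the definition of a meet, i.e. $S_g(z)\le e(a,z)$ for every $z\in P$ and $S_h(w)\le e'(b,w)$ for every $w\in Q$.

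Part (ii) should fall out almost at once. Since $g(a)=a$, we have $f\circ g(\sqcap S_g)=f(a)$, so the left-hand side equals $e'(h(f(a)),f(a))$, which by the definition of $S_h$ is exactly $S_h(f(a))$. Then (M1) for $b=\sqcap S_h$, instantiated at $z=f(a)$, gives $S_h(f(a))\le e'(b,f(a))=e'(\sqcap S_h,f(\sqcap S_g))$, which is the assertion.

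For part (i), put $t=\bigwedge_{y\in P}e'(f\circ g(y),h\circ f(y))$; the goal is $t\le e'(f(a),b)$. First I would move the target to the $P$-side via the Galois connection: $e'(f(a),b)=e(a,f'(b))$, and by (M1) for $a=\sqcap S_g$ at $z=f'(b)$ we get $S_g(f'(b))=e\big(g(f'(b)),f'(b)\big)\le e(a,f'(b))$, so it suffices to show $t\le e\big(g(f'(b)),f'(b)\big)$. Using the Galois connection once more, $e\big(g(f'(b)),f'(b)\big)=e'\big(f(g(f'(b))),b\big)$. Next, from $e'(f(f'(b)),b)=e(f'(b),f'(b))=1$ (by the adjunction and (E1)) and the monotonicity of $h$ we obtain $e'\big(h(f(f'(b))),h(b)\big)=1$, i.e. $e'\big(h(f(f'(b))),b\big)=1$ because $h(b)=b$. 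Finally, instantiating the infimum defining $t$ at $y=f'(b)$ yields $t\le e'\big(f(g(f'(b))),h(f(f'(b)))\big)$, whence by (E2)
\[
t=t\wedge 1\le e'\big(f(g(f'(b))),h(f(f'(b)))\big)\wedge e'\big(h(f(f'(b))),b\big)\le e'\big(f(g(f'(b))),b\big),
\]
and chaining this with the two rewritings above gives $t\le e(a,f'(b))=e'(f(a),b)$.

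I expect the computations themselves to be routine; the one step that requires the right idea is, in part (i), transporting the inequality through the right adjoint $f'$ — converting a bound on $f(a)$ into the combination of a bound on $a$ and the observation that $f'(b)$ is a $g$-prefixpoint to degree at least $t$ — and then exploiting $f(f'(b))\le_{e'}b$ together with the fact that $b$ is an honest fixpoint of $h$. This is precisely the quantitative shadow of the classical Knaster--Tarski argument for the fusion rule, and spotting that route is the crux; everything else is bookkeeping with the frame operations.
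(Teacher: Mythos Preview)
Your proof is correct and follows essentially the same route as the paper's own argument: for (ii), both you and the paper reduce the left-hand side to $S_h(f(a))$ via $g(a)=a$ and then apply (M1); for (i), both instantiate the infimum at $y=f'(b)$, use the adjunction identity $e'(f(f'(b)),b)=1$ together with monotonicity of $h$ and $h(b)=b$ to obtain $e'(h(f(f'(b))),b)=1$, chain via (E2) to reach $e'(f(g(f'(b))),b)=e(g(f'(b)),f'(b))=S_g(f'(b))$, and finish with (M1) and the adjunction. The only cosmetic difference is that you run the argument backwards from the target while the paper runs it forwards.
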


\begin{proof}
(i) By Theorem \ref{Galois}, $(f,f')$ is a fuzzy  Galois connection between $P$ and $Q$, hence
\begin{eqnarray*}
1=e(f'(\sqcap S_h),f'(\sqcap S_h))&=& e'(f(f'(\sqcap S_h)),\sqcap S_h)\\
&\Rightarrow & 1=e'\big(h(f(f'(\sqcap S_h))),h(\sqcap S_h)\big), \mbox{ since $h$ is monotone }\\
&\Rightarrow & 1=e'\big(h(f(f'(\sqcap S_h))),\sqcap S_h\big), \mbox{ by Theorem \ref{3.2}. }
\end{eqnarray*}
It follows that
\begin{eqnarray*}
e'(f\circ g(f'(\sqcap S_h)),h\big(f(f'(\sqcap S_h))\big))&=&
e'(h\big(f(f'(\sqcap S_h))\big),\sqcap S_h)\wedge e'(f\circ g(f'(\sqcap S_h)),h\big(f(f'(\sqcap S_h))\big))\\
&\leq & e'(f\circ g(f'(\sqcap S_h)),\sqcap S_h)=e(g(f'(\sqcap S_h)),f'(\sqcap S_h)).
\end{eqnarray*}
By Theorem \ref{join and meet},
for each $y\in P$, $e(g(y),y)=S_g(y)\leq e(\sqcap S_g,y)$, so
$$e(g(f'(\sqcap S_h)),f'(\sqcap S_h))\leq e(\sqcap S_g,f'(\sqcap S_h))=e'(f(\sqcap S_g),\sqcap S_h).$$
Therefore,
$\bigwedge_{y\in P}e'(f\circ g(y),h\circ f(y))\leq e'(f(\sqcap S_g),\sqcap S_h)$.

(ii) By Theorem \ref{join and meet}(ii), we know that $S_h(y)\leq e'(\sqcap S_h,y)$ for all $y\in Q$, and so by Theorem \ref{3.2},
$e'(\sqcap S_h,f(\sqcap S_g))\geq S_h(f(\sqcap S_g))=e'(h(f(\sqcap S_g)),f(\sqcap S_g))=e'(h(f(\sqcap S_g)),f(g(\sqcap S_g)))$.
\end{proof}

Note that, in Theorem \ref{3.6}, we showed that
$e'(f\circ g(f'(\sqcap S_h)),h\big(f(f'(\sqcap S_h))\big))\leq e'(f(\sqcap S_g),\sqcap S_h)$.

\begin{cor}\label{3.7} {\rm (Exchange rule.)}
Let $(P;e)$ and $(Q;e')$ be $L$-complete lattices and $f,g:P\ra Q$ and $h:Q\ra p$ be monotone maps. If
$f$ possesses a right adjoint $f':Q\ra P$, then
$$e'\Big(f\circ h\circ g(f'(\sqcap S_{g\circ h})),g\circ h\circ f(f'(\sqcap S_{g\circ h}))\Big)\leq
e'(\sqcap S_{f\circ h},\sqcap S_{g\circ h}).$$
\end{cor}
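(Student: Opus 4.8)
The plan is to obtain the corollary as a bookkeeping consequence of the Fusion rule (Theorem~\ref{3.6}), the Rolling rule (Theorem~\ref{3.5}), and the defining inequality (M1) of a meet. The crucial observation is that the left-hand side of the claimed inequality is, verbatim, an instance of the intermediate estimate recorded in the note following Theorem~\ref{3.6}, provided the three maps appearing there are matched correctly.

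First I would note that $h\circ g\colon P\ra P$ and $g\circ h\colon Q\ra Q$ are monotone (a composite of monotone maps is monotone), and that $f\colon P\ra Q$ still carries the right adjoint $f'$. Hence Theorem~\ref{3.6} applies with its map ``$g$'' taken to be $h\circ g$ and its map ``$h$'' taken to be $g\circ h$. The inequality displayed just after Theorem~\ref{3.6}, namely $e'\big(f\circ g(f'(\sqcap S_h)),h(f(f'(\sqcap S_h)))\big)\le e'\big(f(\sqcap S_g),\sqcap S_h\big)$, then reads
\[
e'\big(f\circ h\circ g(f'(\sqcap S_{g\circ h})),\,g\circ h\circ f(f'(\sqcap S_{g\circ h}))\big)\le e'\big(f(\sqcap S_{h\circ g}),\,\sqcap S_{g\circ h}\big),
\]
since $\sqcap S_{g\circ h}$ is exactly the relevant meet and $g\circ h\circ f$ is the chosen $Q$-endomorphism applied after $f$. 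The left-hand side here coincides with the left-hand side of the corollary, so it remains only to prove $e'\big(f(\sqcap S_{h\circ g}),\sqcap S_{g\circ h}\big)\le e'\big(\sqcap S_{f\circ h},\sqcap S_{g\circ h}\big)$.

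For the last inequality I would invoke the Rolling rule, Theorem~\ref{3.5}(i), with the pair $(g,h)$ in the roles of $(f,g)$; since $(P;e)$ and $(Q;e')$ are $L$-complete lattices, all the meets in question exist (by Theorem~\ref{3.2} they are the corresponding least fixpoints), and the rule gives $h(\sqcap S_{g\circ h})=\sqcap S_{h\circ g}$. Therefore $f(\sqcap S_{h\circ g})=f\big(h(\sqcap S_{g\circ h})\big)=(f\circ h)(\sqcap S_{g\circ h})$, so
\[
e'\big(f(\sqcap S_{h\circ g}),\sqcap S_{g\circ h}\big)=e'\big((f\circ h)(\sqcap S_{g\circ h}),\sqcap S_{g\circ h}\big)=S_{f\circ h}(\sqcap S_{g\circ h}).
\]
Finally, applying property (M1) to the meet $\sqcap S_{f\circ h}$ at the point $x=\sqcap S_{g\circ h}$ yields $S_{f\circ h}(\sqcap S_{g\circ h})\le e'\big(\sqcap S_{f\circ h},\sqcap S_{g\circ h}\big)$, and concatenating the three displayed relations completes the proof.

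There is no analytic obstacle here; the only thing that needs genuine care is the substitution into Theorem~\ref{3.6}, i.e.\ deciding which of $h\circ g$ and $g\circ h$ plays the $P$-endomorphism and which the $Q$-endomorphism, and checking that the argument $f'(\sqcap S_{g\circ h})$ is exactly the one that the substitution forces rather than one inserted by hand. Once that matching is in place, the Rolling rule and (M1) close the argument with no further choices.
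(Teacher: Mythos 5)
Your proposal is correct and follows essentially the same route as the paper: substitute $h\circ g$ and $g\circ h$ for the endomorphisms in the intermediate estimate noted after Theorem~\ref{3.6}, rewrite $f(\sqcap S_{h\circ g})$ as $(f\circ h)(\sqcap S_{g\circ h})$ via the Rolling rule, and finish with (M1) for $\sqcap S_{f\circ h}$. No differences worth noting.
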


\begin{proof}
Let $h':Q\ra Q$  and $g':P\ra P$ be defined by $h'=g\circ h$ and $g'=h\circ g$. By Theorem \ref{3.6},
$$
e'\Big(f\circ g'(f'(\sqcap S_{h'})),h'\circ f(\sqcap S_{h'})\Big)\leq e'(f(\sqcap S_{g'}),\sqcap S_{h'})=
e'(f(\sqcap S_{h\circ g}),\sqcap S_{g\circ h})
$$
and by Theorem \ref{3.5}(i),
$\sqcap S_{h\circ g}=h(\sqcap S_{g\circ h})$, so that
$e'(f(\sqcap S_{h\circ g}),\sqcap S_{g\circ h})=e'(f(h(\sqcap S_{g\circ h})),\sqcap S_{g\circ h})=
S_{f\circ h}(\sqcap S_{g\circ h})$. Also, by Theorem \ref{join and meet}, for each $y\in Q$,
$S_{f\circ h}(y)\leq e'(\sqcap S_{f\circ h},y)$, so
$e'\Big(f\circ g'(f'(\sqcap S_{h'})),h'\circ f(\sqcap S_{h'})\Big)\leq e'(\sqcap S_{f\circ h},\sqcap S_{g\circ h})$.
\end{proof}


\section{{\bf Fuzzy $dcpo$s}}

In this section, we define the concept of a $t$-fixpoint and prove that if $(P;e)$ is a fuzzy $dcpo$, then the set of monotone maps on $(P;e)$ is a fuzzy $dcpo$. This will serve us in order to find some of the $t$-fixpoints of $f$. Finally, we find conditions
under which $\sqcap S_f$ exists.

\begin{thm}\label{thm 4.1}
Let $(P;e)$ be a fuzzy $dcpo$ and $H_P$ be the set of all monotone maps on $(P;e)$. Then
$(H_P;\overline{e})$ is a fuzzy $dcpo$, where $\overline{e}(f,g)=\bigwedge_{x\in P}e(f(e),g(x))$ for all $f,g\in H_P$.
\end{thm}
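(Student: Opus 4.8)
The plan is to verify directly that $(H_P;\overline e)$ satisfies the two defining conditions of a fuzzy $dcpo$: first that $\overline e$ is an $L$-order relation on $H_P$ (this is essentially inherited from $e$), and second that every fuzzy directed subset $\mathcal D\in L^{H_P}$ has a join in $(H_P;\overline e)$. The candidate join is constructed pointwise: for each $x\in P$ define $\mathcal D_x\in L^P$ by $\mathcal D_x(y)=\bigvee_{\{g\in H_P\mid g(x)=y\}}\mathcal D(g)$, i.e. $\mathcal D_x=\ev_x^{\rightarrow}(\mathcal D)$ where $\ev_x\colon H_P\to P$ is evaluation at $x$. One would then show that $\mathcal D_x$ is a fuzzy directed subset of $(P;e)$, so that $\gamma(x):=\sqcup\mathcal D_x$ exists because $(P;e)$ is a fuzzy $dcpo$; the first task is to check that $\gamma\colon P\to P$ so defined is monotone, hence $\gamma\in H_P$, and the second is to check that $\gamma=\sqcup\mathcal D$ in $(H_P;\overline e)$, i.e. that it satisfies (J1) and (J2) with respect to $\overline e$.

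Concretely I would proceed in the following order. (1) Check (E1)–(E3) for $\overline e$: (E1) and (E2) are immediate from the corresponding properties of $e$ together with the fact that $\bigwedge$ distributes over the relevant inequalities, and (E3) uses that $\overline e(f,g)=\overline e(g,f)=1$ forces $e(f(x),g(x))=e(g(x),f(x))=1$ for every $x$, hence $f(x)=g(x)$ by (E3) for $e$. (2) Fix a fuzzy directed $\mathcal D\in L^{H_P}$ and show each $\mathcal D_x$ is fuzzy directed in $(P;e)$: (FD1) follows since $\bigvee_{y\in P}\mathcal D_x(y)=\bigvee_{g\in H_P}\mathcal D(g)=1$; for (FD2), given $y,y'\in P$ one expands $\mathcal D_x(y)\wedge\mathcal D_x(y')=\bigvee_{g(x)=y,\,h(x)=y'}\bigl(\mathcal D(g)\wedge\mathcal D(h)\bigr)$ using the frame distributivity, applies (FD2) for $\mathcal D$ to each pair $(g,h)$ to get a ``fuzzy upper bound'' $k\in H_P$, and then notes $e(g,h$-ish$)\le \overline e(g,k)\le e(g(x),k(x))$, so the witness $z=k(x)$ works. (3) Define $\gamma(x)=\sqcup\mathcal D_x$, which exists by hypothesis, and prove monotonicity: this is where Theorem~\ref{Example} (or at least its technique) is the natural tool — indeed $\gamma$ has exactly the shape $F(x)=\sqcup S_x$ from that theorem with $Q=P$, $f=\mathrm{?}$; more directly, using Theorem~\ref{join and meet}(i) one writes $e(\gamma(a),x)=\bigwedge_{y}(\mathcal D_a(y)\to e(y,x))$ and shows $e(a,b)\wedge\mathcal D_a(y)\le \mathcal D_b(y')$ for a suitable $y'$ with $e(y,y')$ large, via monotonicity of each $g\in H_P$, to conclude $e(a,b)\le e(\gamma(a),\gamma(b))$. (4) Finally verify $\gamma=\sqcup\mathcal D$: for (J1), $\mathcal D(g)\le\overline e(g,\gamma)$ because for each $x$, $\mathcal D(g)\le\mathcal D_x(g(x))\le e(g(x),\sqcup\mathcal D_x)=e(g(x),\gamma(x))$ by (J1) for $\mathcal D_x$; for (J2), given $h\in H_P$ one must show $\bigwedge_{g}(\mathcal D(g)\to\overline e(g,h))\le\overline e(\gamma,h)$, which reduces pointwise, for each $x$, to $\bigwedge_g(\mathcal D(g)\to e(g(x),h(x)))\le e(\gamma(x),h(x))$; since $\bigwedge_g(\mathcal D(g)\to e(g(x),h(x)))\le\bigwedge_{y}(\mathcal D_x(y)\to e(y,h(x)))$ — using property (iii) of Heyting algebras to merge the fibers of $\ev_x$ — this follows from (J2) for $\mathcal D_x$ and the formula $e(\gamma(x),h(x))=\bigwedge_y(\mathcal D_x(y)\to e(y,h(x)))$.

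The main obstacle I anticipate is Step~(3), the monotonicity of $\gamma$: one has to transfer the ``directedness plus monotonicity of the $g$'s'' into a comparison $e(a,b)\le e(\gamma(a),\gamma(b))$, and the bookkeeping with the fibers $\{g:g(x)=y\}$ and the suprema defining $\mathcal D_x$ is delicate — Lemma~\ref{lem: 1} and Proposition~\ref{YSI Prop 3.7} will both be needed, much as in the proof of Theorem~\ref{Example}. A secondary subtlety is making sure at Step~(2) that the fuzzy upper bound produced by (FD2) for $\mathcal D$ really yields a genuine witness $z=k(x)\in P$ with the required three inequalities simultaneously; this again leans on $\overline e(g,k)\le e(g(x),k(x))$, which is immediate from the definition of $\overline e$ but must be invoked carefully. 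Everything else — the $L$-order axioms for $\overline e$ and the join conditions — should be routine given Theorems~\ref{join and meet} and~\ref{Example}.
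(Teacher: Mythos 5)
Your proposal follows essentially the same route as the paper: the join of a fuzzy directed $\mathcal D\subseteq L^{H_P}$ is constructed pointwise as $\gamma(x)=\sqcup\mathcal D_x$ where $\mathcal D_x$ is the pushforward of $\mathcal D$ along evaluation at $x$ (the paper's $T_x$), one checks $\mathcal D_x$ is fuzzy directed using (FD2) for $\mathcal D$ together with $\overline e(g,k)\le e(g(x),k(x))$, and monotonicity of $\gamma$ is obtained via Proposition~\ref{YSI Prop 3.7} exactly as you anticipate. The steps you flag as delicate (directedness of $\mathcal D_x$ and monotonicity of the pointwise join) are precisely the ones the paper works through in detail, so the plan is sound and matches the published argument.
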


\begin{proof}
It is easy to see that $(H_P;\overline{e})$ is an $L$-ordered set. Let $S:H_P\ra L$ be a fuzzy directed subset of $(H_P;\overline{e})$.
Then $\bigvee_{f\in H_P}S(f)=1$ and for each $f,g\in H_P$,
\begin{eqnarray}
\label{dcpo1} S(f)\wedge S(g)\leq \bigvee_{\gamma \in H_P}\Big( S(\gam)\wedge \overline{e}(f,\gam)\wedge \overline{e}(g,\gam)\Big).
\end{eqnarray}
First, we show that $\sqcup S$ exists. That is, there exists a map $\alpha_0:P\ra P$ such that
$$
\overline{e}(\alpha_0,f)=\bigwedge_{\gam\in H_P}\Big(S(\gam)\ra \overline{e}(\gam,f)\Big) \quad \mbox{ for all $f\in H_P$, }
$$
which is equivalent to
\begin{eqnarray*}
\overline{e}(\alpha_0,f)= \bigwedge_{\gam\in H_P}\Big(S(\gam)\ra (\bigwedge_{y\in P} e\big(\gam(y),f(y)\big))\Big)
&=& \bigwedge_{\gam\in H_P}\bigwedge_{y\in P}\Big(S(\gam)\ra e\big(\gam(y),f(y)\big)\Big)\\
&=& \bigwedge_{y\in P}\bigwedge_{\gam\in H_P}\Big(S(\gam)\ra e(\gam(y),f(y))\Big).
\end{eqnarray*}
So, it suffices to show that
\begin{eqnarray}
\label{dcpo2} \bigwedge_{y\in P}e(\alpha_0(y),f(y))=
\bigwedge_{y\in P}\bigwedge_{\gam\in H_P}\Big(S(\gam)\ra e(\gam(x),f(x))\Big) \mbox{ for all $f\in H_P$ }.
\end{eqnarray}
Put $f\in H$. We claim that, for all $y\in P$, there exists an element $u_y\in P$ such that
\begin{eqnarray}
\label{dcpo3} e(u_y,f(y))=\bigwedge_{\gam\in H_P}\Big(S(\gam)\ra e(\gam(x),f(x))\Big).
\end{eqnarray}
Let $x\in P$ and $X:=\{\gam(x)|\gam\in H_P \}$. Define the map $T_x:P\ra L$ by
\begin{equation*}
T_x(y)=\left\{\begin{array}{ll}
0 & \text{$y\in P-X$ } \\
\bigvee \{S(h)|\ h\in H_P,\ h(x)=y\} & \text{$y\in X$. }\\
\end{array} \right.
\end{equation*}
Clearly, $T_x$ is a well-defined map. In the following, we show that $T_x$ is a fuzzy directed subset on $(P;e)$.

{\rm(i)}  $\bigvee_{u\in P}T_x(u)=\bigvee_{u\in X}T_x(u)=\bigvee_{\gam\in H_p}T_x(\gam(x))=
\bigvee_{\gam \in H_P}\bigvee_{\{h\in H_P|\ h(x)=\gam(x) \}}S(h)=\bigvee_{h\in H_P}S(h)=1$ (since $S:H_P\ra L$ is a fuzzy
directed set on $(H_P;\overline{e})$).

{\rm(ii)} Let $u,v\in P$. If $u\in P-X$ or $v\in P-X$, then by definition, $T_x(u)\wedge T_x(v)=0$ and so
$T_x(u)\wedge T_x(v)\leq \bigvee_{z\in P}\big(T_x(z)\wedge e(u,z)\wedge e(v,z)\big)$. Otherwise, $u=\gam_1(x)$ and
$v=\gam_2(x)$ for some $\gam_1,\gam_2\in H_P$. It follows that
\begin{eqnarray}
\label{dcpo4} T_x(u)\wedge T_x(v)=\bigvee_{\{h\in H_P|\ h(x)=\gam_1(x)\}}\bigvee_{\{k\in H_P|\ k(x)=\gam_2(x)\}} (S(h)\wedge S(k)).
\end{eqnarray}
Also, we have
\begin{eqnarray*}
\bigvee_{z\in P}\big(T_x(z)\wedge e(u,z)\wedge e(v,z) \big)&=&\bigvee_{z\in X}\big(T_x(z)\wedge e(u,z)\wedge e(v,z) \big)\\
&=& \bigvee_{\gam\in H_P}\big(T_x(\gam(x))\wedge e(u,\gam(x))\wedge e(v,\gam(x)) \big)\\
&=& \bigvee_{h\in H_P}\Big(\big(\bigvee_{\{h\in H|\ h(x)=\gam(x)\}}S(h) \big)\wedge e(u,\gam (x))\wedge e(v,\gam (x)) \Big)\\
&=& \bigvee_{h\in H_P}\bigvee_{\{h\in H|\ h(x)=\gam(x)\}} \big(S(h)\wedge e(u,\gam(x))\wedge e(v,\gam(x)) \big)\\
&=& \bigvee_{\gam\in H_P} \big(S(h)\wedge e(u,\gam(x))\wedge e(v,\gam(x))\big)\\
&=& \bigvee_{\gam\in H_P} \big(S(h)\wedge e(\gam_1(x),\gam(x))\wedge e(\gam_2(x),\gam(x))\big)\\
&\geq & \bigvee_{\gam\in H_P} \big(S(h)\wedge \overline{e}(\gam_1,\gam)\wedge \overline{e}(\gam_2,\gam)\big)\\
\end{eqnarray*}
so by (\ref{dcpo1}), $\bigvee_{z\in P}\big(T_x(z)\wedge e(u,z)\wedge e(v,z) \big)\geq S(\gam_1)\wedge S(\gam_2)$.
Since $\gam_1$ and $\gam_2$ are arbitrary elements of $H_P$ such that $\gam_1(x)=u$ and $\gam_2(x)=v$, then
$\bigvee_{z\in P}\big(T_x(z)\wedge e(u,z)\wedge e(v,z) \big)\geq S(h)\wedge S(k)$ for all $k,h\in H_P$ such that
$h(x)=u$ and $k(x)=v$, thus by (\ref{dcpo4}),
$\bigvee_{z\in P}\big(T_x(z)\wedge e(u,z)\wedge e(v,z) \big)\geq
\bigvee_{\{h\in H_P|\ h(x)=\gam_1(x)\}}\bigvee_{\{k\in H_P|\ k(x)=\gam_2(x)\}} (S(h)\wedge S(k))=
T_x(u)\wedge T_x(v)$.

(i) and (ii) imply that $T_x$ is a fuzzy directed set on $(P;e)$ for all $x\in P$, whence
by the assumption, $\sqcup T_x$ exists for all $x\in P$. Let $u_x:=\sqcup T_x$ for all $x\in P$. Then for each $x\in P$, we have
\begin{eqnarray}
\label{dcpo5} e(u_x,z)=\bigwedge_{t\in P}\big(T_x(t)\ra e(t,z) \big),\quad \mbox{ for all $z\in P$}.
\end{eqnarray}
Define a map $\alpha_0:P\ra L$ by $\alpha_0(x)=\sqcup T_x$ for all $x\in P$.
By (\ref{dcpo5}), for each $f\in H_P$ and $y,z\in P$,
\begin{eqnarray}
e(\alpha_0(y),z)=e(u_y,z)&=& \bigwedge_{t\in P}\big(T_{y}(t)\ra e(t,z) \big)\\
&=& \bigwedge_{\gam\in H_P}\big(T_{y}(\gam(y))\ra e(\gam(y),z) \big), \mbox{ by definition of $T_y$ }\\
&=& \bigwedge_{\gam\in H_P}\Big(\big(\bigvee_{\{h\in H|\ h(y)=\gam(y)\}}S(h)\big)\ra e(\gam(y),z) \Big)\\
&=& \bigwedge_{\gam\in H_P} \bigwedge_{\{h\in H|\ h(y)=\gam(y)\}} \Big( S(h)\ra e(\gam(y),z) \Big) \\
\label{dcpo6} &=& \bigwedge_{\gam\in H_P}\big(S(\gam)\ra e(\gam(y),z) \big).
\end{eqnarray}
It follows that
$\bigwedge_{y\in P}e(\alpha_0(y),f(y))=\bigwedge_{y\in P}\bigwedge_{\gam\in H_P}\big(S(\gam)\ra e(\gam(y),f(y)) \big)$ and so
(\ref{dcpo2}) holds. Hence $\alpha_0=\sqcup S$. Now, we show that $\alpha_0:P\ra P$ is a monotone map.
Let $x,y\in P$. Also, by  (\ref{dcpo6}), for all $z\in P$,
$$e(\alpha_0(x),z)=\bigwedge_{\gam\in H_P}(S(\gam)\ra e(\gam(x),z)), \quad
e(\alpha_0(y),z)=\bigwedge_{\gam\in H_P}(S(\gam)\ra e(\gam(y),z)).$$
so,
\begin{eqnarray}
\label{dcpo7} S(\gam)\leq e(\gam(y),\alpha_0(y))\\
\label{dcpo8} e(\alpha_0(x),\alpha_0(y))=\bigwedge_{\gam\in H_P}(S(\gam)\ra e(\gam(x),\alpha_0(y))).
\end{eqnarray}
Hence, by (\ref{dcpo8}),
\begin{eqnarray*}
e(x,y)\leq e(\alpha_0(x),\alpha_0(y))&\Leftrightarrow & e(x,y)\leq S(\gam)\ra e(\gam(x),\alpha_0(y)),
\mbox{ for all $\gam\in H_P$ }\\
&\Leftrightarrow & e(x,y)\wedge S(\gam)\leq  e(\gam(x),\alpha_0(y)), \mbox{ for all $\gam\in H_P$ }\\
&\Leftrightarrow & S(\gam)\leq e(x,y)\ra e(\gam(x),\alpha_0(y)), \mbox{ for all $\gam\in H_P$ }.
\end{eqnarray*}
Since for each $\gam\in H_P$, $e(x,y)\leq e(\gam(x),\gam(y))$, then by Proposition \ref{YSI Prop 3.7},
$e(\gam(y),\alpha_0(y))\leq e(\gam(x),\gam(y))\ra e(\gam(x),\alpha_0(y)) \leq e(x,y)\ra e(\gam(x),\alpha_0(y))$.
Thus by (\ref{dcpo7}), $S(\gam)\leq e(x,y)\ra e(\gam(x),\alpha_0(y))$. Therefore, $\alpha_0$ is monotone and, whence,
it belongs to $H_P$.
\end{proof}

\begin{thm}\label{thm 4.2}
Let $(P;e)$ be an fuzzy $dcpo$, $H_P$ be the set of all monotone maps on $(P;e)$ and $S:H_P\ra L$ be defined
by $S(\alpha)=\overline{e}(Id_P,\alpha)$ for all $\alpha\in H_P$. Then $\sqcup S$ exists and belongs to $H_P$. Moreover, $S(\alpha)\wedge S(\sqcup S)\leq \overline{e}(\alpha \circ \sqcup S,\sqcup S)\wedge \overline{e}(\sqcup S,\alpha\circ \sqcup S)$
for all $\alpha\in H_P$.
\end{thm}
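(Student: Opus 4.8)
The plan is to recognise $S$ as a fuzzy directed subset of the fuzzy $dcpo$ $(H_P;\overline{e})$ furnished by Theorem \ref{thm 4.1}: once this is done, $\sqcup S$ exists and automatically lies in $H_P$, and the displayed inequality drops out of axiom (J1) for the join together with one short estimate on composites. Throughout I would use only (E1), (E2), monotonicity, and the specialisation of an infimum to a point.

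First I would check that $S$ satisfies (FD1) and (FD2). The identity $Id_P$ is monotone and $S(Id_P)=\overline{e}(Id_P,Id_P)=\bigwedge_{x\in P}e(x,x)=1$, so $\bigvee_{\alpha\in H_P}S(\alpha)=1$ and (FD1) holds. For (FD2), given $\alpha,\beta\in H_P$ the witness I take is $\gamma:=\alpha\circ\beta$, which is again monotone since a composite of monotone maps is monotone. Put $u:=S(\alpha)=\bigwedge_{x\in P}e(x,\alpha(x))$ and $v:=S(\beta)=\bigwedge_{x\in P}e(x,\beta(x))$. For every $x\in P$: $e(\beta(x),\alpha(\beta(x)))\geq u$ by specialising the infimum defining $u$ to the point $\beta(x)$; hence by (E2), $e(x,\alpha(\beta(x)))\geq e(x,\beta(x))\wedge e(\beta(x),\alpha(\beta(x)))\geq v\wedge u$; and $e(\alpha(x),\alpha(\beta(x)))\geq e(x,\beta(x))\geq v$ because $\alpha$ is monotone. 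Taking infima over $x$ gives $S(\gamma)\geq u\wedge v$, $\overline{e}(\beta,\gamma)\geq u$, $\overline{e}(\alpha,\gamma)\geq v$, so
$$S(\alpha)\wedge S(\beta)=u\wedge v\leq S(\gamma)\wedge\overline{e}(\alpha,\gamma)\wedge\overline{e}(\beta,\gamma)\leq\bigvee_{\eta\in H_P}\big(S(\eta)\wedge\overline{e}(\alpha,\eta)\wedge\overline{e}(\beta,\eta)\big),$$
which is (FD2). Hence $S$ is a fuzzy directed subset of $(H_P;\overline{e})$, and since $(H_P;\overline{e})$ is a fuzzy $dcpo$ by Theorem \ref{thm 4.1}, the join $\alpha_0:=\sqcup S$ exists and, being an element of $H_P$, is monotone.

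For the moreover part, fix $\alpha\in H_P$ and write $\alpha_0=\sqcup S$. Repeating the two estimates above with $\beta=\alpha_0$: for every $x\in P$, $e(\alpha_0(x),\alpha(\alpha_0(x)))\geq S(\alpha)$ and $e(x,\alpha(\alpha_0(x)))\geq e(x,\alpha_0(x))\wedge e(\alpha_0(x),\alpha(\alpha_0(x)))\geq S(\alpha_0)\wedge S(\alpha)$. Taking infima over $x$ yields $\overline{e}(\alpha_0,\alpha\circ\alpha_0)\geq S(\alpha)\geq S(\alpha)\wedge S(\alpha_0)$ and $S(\alpha\circ\alpha_0)=\overline{e}(Id_P,\alpha\circ\alpha_0)\geq S(\alpha)\wedge S(\alpha_0)$. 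Now $\alpha\circ\alpha_0\in H_P$, so axiom (J1) for the join $\alpha_0=\sqcup S$ gives $S(\alpha\circ\alpha_0)\leq\overline{e}(\alpha\circ\alpha_0,\alpha_0)$, whence $S(\alpha)\wedge S(\alpha_0)\leq\overline{e}(\alpha\circ\alpha_0,\alpha_0)$. Combining the two bounds gives $S(\alpha)\wedge S(\sqcup S)\leq\overline{e}(\alpha\circ\sqcup S,\sqcup S)\wedge\overline{e}(\sqcup S,\alpha\circ\sqcup S)$, as required.

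I do not anticipate a genuine obstacle: the one real decision is to guess that the witness in (FD2), and the map whose $S$-value is estimated in the moreover part, should be the composite $\alpha\circ\beta$ — this is the fuzzy counterpart of the classical observation that if $Id\le\alpha$ and $Id\le\beta$ pointwise then $\alpha\circ\beta$ dominates both $\alpha$ and $\beta$. After that, each inequality is just an instance of (E2), of monotonicity, or of specialising an infimum, and the only thing to be careful about is keeping straight the two kinds of infima — over $P$ versus over $H_P$ — that appear when $\overline{e}$ and $S$ are unwound.
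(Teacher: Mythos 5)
Your proposal is correct and follows essentially the same route as the paper: verify (FD1) via $Id_P$, verify (FD2) with the composite $\alpha\circ\beta$ as witness (using monotonicity and the specialisation $e(\beta(x),\alpha(\beta(x)))\geq S(\alpha)$), invoke Theorem \ref{thm 4.1} to get $\sqcup S\in H_P$, and then derive the final inequality from (J1) applied to $\alpha\circ\sqcup S$ together with the bounds $S(\alpha\circ\beta)\geq S(\alpha)\wedge S(\beta)$ and $\overline{e}(\beta,\alpha\circ\beta)\geq S(\alpha)$. No gaps.
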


\begin{proof}
First, we show that $S$ is a fuzzy directed subset of $(H_P,\overline{e})$.
Since $Id_P\in H$ and $S(Id_P)=\overline{e}(Id_P,Id_P)=1$, then $\bigvee_{\gam\in H_P}S(\gam)=1$.
Now, we show that $S(f)\wedge S(g)\leq\bigvee_{\alpha\in H_P}\big(S(\alpha)\wedge \overline{e}(f,\alpha)\wedge \overline{e}(g,\alpha)\big)$ for all $f,g\in H_P$. Put $f,g\in H_P$.
\begin{eqnarray}
\label{thm4.2.1} S(f)\wedge S(g)=\big(\bigwedge_{x\in P}e(x,f(x))\big)\wedge \big(\bigwedge_{x\in P}e(x,g(x))\big)=\bigwedge_{x\in P}\big(e(x,f(x))\wedge e(x,g(x))\big).
\end{eqnarray}
{\small
\begin{eqnarray}
\label{thm4.2.2} \bigvee_{\alpha\in H_P}\!\!\!\!\!\big(S(\alpha)\wedge \overline{e}(f,\alpha)\wedge \overline{e}(g,\alpha)\big)\!\!\!\!\!&=&\!\!\!\!\!
\bigvee_{\alpha\in H_P}\!\!\!\!\!\Big(\big(\bigwedge_{x\in P}e(x,\alpha(x))\big)\wedge \big(\bigwedge_{x\in P}e(f(x),\alpha(x)) \big)\wedge \big(\bigwedge_{x\in P}e(g(x),\alpha(x)) \big) \Big)\\
\label{thm4.2.3} \!\!\!\!\!&=&\!\!\!\!\! \bigvee_{\alpha\in H_P}\bigg(\Big(\bigwedge_{x\in P}e(x,\alpha(x))\wedge e(f(x),\alpha(x)) \Big)\wedge \Big(\bigwedge_{x\in P}e(g(x),\alpha(x)) \Big) \bigg).
\end{eqnarray} }
Clearly, $f\circ g\in H_P$ and for each $x\in P$, we have

(i) $e(x,f\circ g(x))\geq e(x,f(x))\wedge e(f(x),f\circ g(x))\geq e(x,f(x))\wedge e(x,g(x))$ (since $f$ is monotone).

(ii) $e(f(x),f\circ g(x))\geq e(x,g(x))$, (since $f$ is monotone),%

\noindent
which imply that $\bigwedge_{x\in P}\big(e(x,f\circ g(x))\wedge e(f(x),f\circ g(x))\big)\geq \bigwedge_{x\in P} \big(e(x,f(x))\wedge e(x,g(x)) \big)$. Also, $$\bigwedge_{x\in P} e(g(x),f\circ g(x))\geq \bigwedge_{x\in P}e(x,f(x)), \quad \mbox{ since $Im(g)\s P$ }$$
so, we have
\begin{eqnarray*}
\bigwedge_{x\in P}\Big(e(x,f\circ g(x))\wedge e(f(x),f\circ g(x))\Big)&\wedge& \bigwedge_{x\in P} e(g(x),f\circ g(x))\\
&\geq&  \bigwedge_{x\in P} \big(e(x,f(x))\wedge e(x,g(x)) \big)\wedge \bigwedge_{x\in P}e(x,f(x)) \\
&=&\bigwedge_{x\in P} \big(e(x,f(x))\wedge e(x,g(x)) \big).
\end{eqnarray*}
From $f\circ g\in H$, (\ref{thm4.2.1}) and (\ref{thm4.2.3}), it follows that
\begin{eqnarray*}
S(f)\wedge S(g)&=&\bigwedge_{x\in P} \big(e(x,f(x))\wedge e(x,g(x)) \big) \\
&\leq& \bigvee_{\alpha\in H_P}\Big(\big(\bigwedge_{x\in P}e(x,\alpha(x))\wedge e(f(x),\alpha(x)) \big)\wedge \big(\bigwedge_{x\in P}e(g(x),\alpha(x)) \big) \Big)\\
&=&\bigvee_{\alpha\in H_P}\!\!\!\!\!\big(S(\alpha)\wedge \overline{e}(f,\alpha)\wedge \overline{e}(g,\alpha)\big).
\end{eqnarray*}
Therefore, $S$ is a fuzzy directed subset of $(H_P,\overline{e})$.
By Theorem \ref{thm 4.1}, there exists $\beta\in H_P$ such that
$\beta=\sqcup S$, hence by Theorem \ref{join and meet}(i), for each
$f\in H_P$, $\overline{e}(\beta,f)=\bigwedge_{\alpha\in H_{P}}\big(S(\alpha)\ra \overline{e}(\alpha,f) \big)$,
whence $S(\alpha\circ \beta)\leq \overline{e}(\alpha\circ\beta,\beta)$. From (E2), it can be easily obtained
that $S(\alpha\circ \beta)=\overline{e}(Id_P,\alpha\circ\beta)\geq\overline{e}(Id_P,\alpha)\wedge
\overline{e}(\alpha,\alpha\circ\beta)\geq \overline{e}(Id_P,\alpha)\wedge \overline{e}(Id_P,\beta)=S(\alpha)\wedge
S(\beta)$. Thus, $S(\alpha)\wedge S(\beta)\leq \overline{e}(\alpha\circ\beta,\beta)$.
On the other hand, $ \overline{e}(\beta,\alpha\circ\beta)\geq \overline{e}(Id_P,\alpha)=S(\alpha)$
(since $Im(\beta)\s P$). By summing up the above results, we get that
$S(\alpha)\wedge S(\beta)\leq \overline{e}(\beta,\alpha\circ\beta)\wedge \overline{e}(\alpha\circ\beta,\beta)$.
\end{proof}

\begin{defn}
Let $(P;e)$ be an $L$-ordered set, $t\in L$ and $f:(P;e)\ra (P;e)$ be monotone.
An element $x\in P$ is called a {\it $t$-fixpoint} of $f$ if $t\leq e(x,f(x))\wedge e(f(x),x)$.
Obviously, the concepts of a $1$-fixpoint and a fixpoint are the same.
\end{defn}

\begin{cor}\label{Cor 4.3}
Consider the assumptions of Theorem {\rm \ref{thm 4.2}} and let $\beta=\sqcup S$. Then
\begin{itemize}
\item[{\rm (i)}] For each $x\in P$ and each $f\in H_P$, $\beta(x)$ is a $t$-fixpoint of $f$,
where $t=S(f)\wedge S(\beta)$.
\item[{\rm (ii)}] For each $f\in H_P$, $S(f)\leq \overline{e}(f,\beta\circ F)$.
\item[{\rm (iii)}] For each $f\in H_P$, there exists $u\in P$ such that $S(f)\wedge S(\beta)\leq
e(f(u),u)\wedge e(u,f(u))$.
\item[{\rm (iv)}] If $f\in H_P$ such that $S(f)=1$, then $f\circ \beta=f$. That is, $\beta(x)$ is a
fixpoint for $f$ for all $x\in P$.
\end{itemize}
\end{cor}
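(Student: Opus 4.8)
The plan is to read all four items off Theorem~\ref{thm 4.2} after one preliminary normalization. Write $\beta=\sqcup S\in H_P$ (it exists by Theorem~\ref{thm 4.2}). First I would record that $S(\beta)=1$: by (E1), $S(Id_P)=\overline{e}(Id_P,Id_P)=\bigwedge_{x\in P}e(x,x)=1$, so applying Lemma~\ref{3.1} to the $L$-ordered set $(H_P;\overline{e})$ and the map $S$ gives $Id_P\leq_{\overline{e}}\beta$, i.e.\ $\overline{e}(Id_P,\beta)=1$; since $\overline{e}(Id_P,\beta)=S(\beta)$ this says $S(\beta)=1$ (hence $e(x,\beta(x))=1$ for every $x\in P$, and $\beta=\max S$ by Proposition~\ref{modify}). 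Consequently Theorem~\ref{thm 4.2} specializes to
$$S(f)=S(f)\wedge S(\beta)\leq\overline{e}(f\circ\beta,\beta)\wedge\overline{e}(\beta,f\circ\beta)\qquad\text{for every }f\in H_P.$$

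To prove (i) I would evaluate the two copies of $\overline{e}$ in this display at a single argument: for each fixed $x\in P$ it yields $S(f)\wedge S(\beta)\leq e(f(\beta(x)),\beta(x))\wedge e(\beta(x),f(\beta(x)))$, which is precisely the statement that $\beta(x)$ is a $t$-fixpoint of $f$ with $t=S(f)\wedge S(\beta)$. Item (iii) is then immediate: for any $x_0\in P$, set $u:=\beta(x_0)$, and (i) gives $S(f)\wedge S(\beta)\leq e(f(u),u)\wedge e(u,f(u))$. Item (iv) is the case $t=1$ of (i): for monotone $f$ one has $F=f$ by Theorem~\ref{Example}, so $\beta\circ F=\beta\circ f$; if $S(f)=1$ then $t=S(f)\wedge S(\beta)=1$, so (i) forces $e(f(\beta(x)),\beta(x))=e(\beta(x),f(\beta(x)))=1$, and then (E3) gives $f(\beta(x))=\beta(x)$ for every $x\in P$; that is, each $\beta(x)$ is a fixpoint of $f$ (equivalently $f\circ\beta=\beta$).

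Finally, for (ii) I would again use $F=f$ (Theorem~\ref{Example}) together with $S(\beta)=1$: the latter gives $e(y,\beta(y))=1$ for every $y\in P$, so taking $y=f(x)$ we get $\overline{e}(f,\beta\circ f)=\bigwedge_{x\in P}e(f(x),\beta(f(x)))=1\geq S(f)$; the substantive inequality here, $S(f)\leq\overline{e}(f,\beta)$, is also just axiom (J1) for the join $\beta=\sqcup S$ applied to the element $f$. I do not expect a genuine obstacle: the corollary is a bookkeeping exercise in unfolding the definitions of $\overline{e}$, of a $t$-fixpoint, and of the pointwise order $\leq_{\overline{e}}$ on $H_P$; the single point worth isolating is the normalization $S(\beta)=1$, after which Theorem~\ref{thm 4.2} supplies everything.
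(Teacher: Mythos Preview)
Your argument is correct. The one organizational move you make that the paper does not is to establish $S(\beta)=1$ at the outset (via $S(Id_P)=1$ and Lemma~\ref{3.1}); this single normalization lets you read all four items off Theorem~\ref{thm 4.2} uniformly. The paper proceeds differently: for (ii) it argues $S(f)=\overline{e}(Id_P,f)\leq\overline{e}(\beta,\beta\circ f)$ by monotonicity of $\beta$, rewrites the right-hand side via Theorem~\ref{join and meet}(i) as $\bigwedge_{\alpha}(S(\alpha)\to\overline{e}(\alpha,\beta\circ f))$, extracts the $\alpha=f$ term to get $S(f)\leq S(f)\to\overline{e}(f,\beta\circ f)$, and then uses the Heyting identity $a\leq a\to b\Rightarrow a\leq b$. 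Your route is shorter and in fact yields the stronger conclusion $\overline{e}(f,\beta\circ f)=1$. For (iv) the paper first shows $S(\beta)\geq S(f)=1$ from (J1) and (E2), then invokes Theorem~\ref{thm 4.2}; you already have $S(\beta)=1$ unconditionally, so (iv) is just the $t=1$ instance of (i). Parts (i) and (iii) are handled the same way in both.

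One small caution: your appeal to Theorem~\ref{Example} to identify $F$ with $f$ is not quite licensed here, since that theorem assumes the codomain is an $L$-complete lattice while $(P;e)$ is only a fuzzy $dcpo$. In practice the ``$F$'' in the statement of (ii) is a typographical slip for $f$ (as the paper's own proof of (ii) confirms), so this does not affect your argument.
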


\begin{proof}
(i) The proof is a straightforward consequence of Theorem \ref{thm 4.2}(i).

(ii) Let $f\in H_P$. Since $\beta=\sqcup S$, then
\begin{eqnarray*}
S(f)&=&\overline{e}(Id_P,f)\leq \overline{e}(\beta,\beta\circ f), \mbox{ since $\beta$ is monotone}\\
&=& \bigwedge_{\alpha\in P}\big(S(\alpha)\ra \overline{e}(\alpha,\beta\circ f) \big), \mbox{ by Theorem \ref{join and meet}(i).}
\end{eqnarray*}
So, $S(f)\leq S(f)\ra \overline{e}(f,\beta\circ f)$, which implies that $S(f)\leq \overline{e}(f,\beta\circ f)$
(since $L$ is a frame, $a\leq a\ra b$ implies that $a=a\wedge (a\ra b)=a\wedge b$, so $a\leq b$).

(iii) By (i), for each $x\in P$, the element $u=\beta(x)$ satisfies the condition
$S(f)\wedge S(\beta)\leq e(f(u),u)\wedge e(u,f(u))$.

(iv) Let $f\in H_P$ such that $S(f)=1$. Then by Theorem \ref{join and meet}(i), $1=S(f)\leq \overline{e}(f,\beta)$, hence $f(x)\leq \beta(x)$ for all $x\in P$, which implies that
$S(\beta)=\bigwedge_{x\in P}e(x,\beta(x))\geq \bigwedge_{x\in P}e(x,f(x))=S(f)=1$.
So, by (i), $1=S(f)\wedge S(\beta)\leq \overline{e}(f\circ\beta,\beta)\wedge \overline{e}(\beta,f\circ\beta)$. That
is, $f\circ \beta=\beta$.
\end{proof}

We know that if $(P;\leq)$ is a CPO and $f:P\ra P$ is an ordered preserving map, then $Fix(f)$
has a least element. Moreover, by Theorem \ref{3.2}, if $(P;e)$ is an $L$-complete lattice, then $\sqcap S_f$
exists and is the least fixpoint of $f$. In the sequel, we attempt to find conditions for a monotone map
on a fuzzy $dcpo$ $(P;e)$ under which $\sqcap S_f$ exists.

\begin{defn}
Let $(P;e)$ be an $L$-ordered set, $S\in L^P$ and $X\s P$. An element $b\in X$ is called a {\it join of $S$ in $X$} and
is denoted by $\bigsqcup_{X} S=b$ if, for each $x\in X$, $e(b,x)=\bigwedge_{y\in X}\big(S(y)\ra e(y,x) \big)$.
In a similar way, we can define the notion of a meet of $S$ in $X$, $\bigsqcap_{X} S$.
\end{defn}

\begin{rmk}
Let $(P;e)$ be an $L$-ordered set, $S\in L^P$ such that $a=\sqcup S$ and $X\s P$. Suppose that $b$ is a join of $S$ in $X$. Then
$b\in X$ and for each $x\in X$,
$e(b,x)=\bigwedge_{y\in X}\big(S(y)\ra e(y,x) \big)\geq \bigwedge_{y\in P}\big(S(y)\ra e(y,x) \big)=e(a,x)$ and so
$1=e(b,b)=e(a,b)$. Thus, $a\leq_e b$. By a similar way, we can show that, if $a'=\sqcap S$ and $b'$ is a meet of $S$ in $X$, then
$b'\leq_e a'$.
\end{rmk}

In a special case, if $X$ is a subset of an $L$-ordered set $(P;e)$,
$S\in L^P$, $b=\bigsqcup_{X}S$ ($b'=\bigsqcap_{X}S$)
and  $a=\sqcup S$ ($a'=\sqcap S$) such that
$Supp(S):=\{x\in P|\ S(x)\neq 0\}$ is a subset of $X$, then for all $x\in X$, we have
$$e(a,x)=\bigwedge_{y\in P}\big(S(y)\ra e(y,x) \big)=\bigwedge_{y\in X}\big(S(y)\ra e(y,x) \big)=e(b,x).$$
So, $e(a,b)=e(b,b)=1$. That is, $a\leq_e b$. A similar proof shows that $b'\leq_e a'$.

\begin{defn}
Let $X$ be a non-empty subset of an $L$-ordered set $(P;e)$ and $a\in P$. An element $b\in X$ is called a
{\it strong $L$-cover} for $a$ in $X$ if $e(a,x)=e(b,x)$ for all $x\in X$.
We must note that, from $b\in X$ it follows that
$e(a,b)=e(b,b)=1$. Also, if $a$ has a strong $L$-cover
in $X$, then it is unique. Indeed, if $b,b'\in X$ are strong $L$-covers for $a$ in $X$, then
by definition, $1=e(b',b')=e(a,b')=e(b,b')$. Similarly, $e(b',b)=1$, so $b=b'$.
\end{defn}

\begin{prop}\label{prop 4.4}
Let $X$ be a non-empty subset of an $L$-ordered set $(P;e)$, $S\in L^P$ $a=\sqcup S$ such that
$Supp(S)\s X$.
Then $b$ is a strong $L$-cover for $a$ in $X$ if and only if $b=\bigsqcup_{X}S$.
\end{prop}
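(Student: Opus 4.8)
The plan is to reduce both implications to the single defining identity for a fuzzy join, namely $e(a,x)=\bigwedge_{y\in P}\big(S(y)\ra e(y,x)\big)$ for all $x\in P$, which is available from Theorem~\ref{join and meet}(i) since $a=\sqcup S$, and then to exploit the hypothesis $Supp(S)\s X$ to shrink the index set of this infimum from $P$ to $X$. Concretely, the first step I would record is the reduction: if $y\in P\setminus X$ then $S(y)=0$, so $S(y)\ra e(y,x)=0\ra e(y,x)=1$ for every $x\in P$, and a term equal to $1$ does not affect a meet. Hence
\[
\bigwedge_{y\in P}\big(S(y)\ra e(y,x)\big)=\bigwedge_{y\in X}\big(S(y)\ra e(y,x)\big)\qquad\text{for all }x\in P,
\]
and combining this with Theorem~\ref{join and meet}(i) gives $e(a,x)=\bigwedge_{y\in X}\big(S(y)\ra e(y,x)\big)$ for every $x\in P$, in particular for every $x\in X$. (This is exactly the computation already sketched in the Remark preceding the statement.)

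With this identity in hand, both directions are one-line substitutions. For the forward implication, assume $b$ is a strong $L$-cover for $a$ in $X$; then $b\in X$ and $e(b,x)=e(a,x)=\bigwedge_{y\in X}\big(S(y)\ra e(y,x)\big)$ for all $x\in X$, which is precisely the defining condition for $b=\bigsqcup_{X}S$. For the converse, assume $b=\bigsqcup_{X}S$; then $b\in X$ and $e(b,x)=\bigwedge_{y\in X}\big(S(y)\ra e(y,x)\big)=e(a,x)$ for all $x\in X$, so $b$ is a strong $L$-cover for $a$ in $X$. Note that both notions require $b\in X$ and nothing more beyond the infimum identity, so the equivalence is essentially symmetric once the reduction is established.

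I do not expect any genuine obstacle here: the only point needing care is the legitimacy of replacing $\bigwedge_{y\in P}$ by $\bigwedge_{y\in X}$, and that is exactly where the assumption $Supp(S)\s X$ is used (and the reason it is imposed in the hypothesis). Everything else is a direct unwinding of the definitions of $\sqcup S$, of $\bigsqcup_{X}S$, and of a strong $L$-cover, together with the elementary frame fact $0\ra c=1$.
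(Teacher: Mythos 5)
Your proof is correct and follows essentially the same route as the paper's: both hinge on using $Supp(S)\s X$ to replace $\bigwedge_{y\in P}$ by $\bigwedge_{y\in X}$ in the join identity from Theorem~\ref{join and meet}(i), after which each direction is a direct unwinding of the two definitions. If anything, you are slightly more careful than the paper in restricting the identity to $x\in X$ (which is all the definition of $\bigsqcup_X S$ actually provides) and in writing out both implications rather than declaring the converse ``similar.''
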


\begin{proof}
Let $b=\bigsqcup_{X}S$. Then for all $x\in P$,
\begin{eqnarray*}
e(b,x)&=&\bigwedge_{y\in X}\big(S(y)\ra e(y,x)\big)=\bigwedge_{y\in P}\big(S(y)\ra e(y,x)\big), \mbox{ since $Supp(S)\s X$ }\\
&=& e(a,x), \mbox{ since $\sqcup S=a$}.
\end{eqnarray*}
It follows that $b$ is a strong $L$-cover for $a$ in $X$. The proof of the converse is similar.
\end{proof}

\begin{defn}
Let $(P;e)$ be an $L$-ordered set and $f:P\ra P$ be monotone. An element $S\in L^P$ is called
{\it $f$-invariant} if,  for all $x\in P$, $S(x)\leq S(f(x))$.
\end{defn}

\begin{exm}\label{exm 4.5}
Let $(P;e)$ be an $L$-ordered set, $f:P\ra P$ be a monotone map and $a$ be a fixpoint of $f$.

(i) Then the maps $\downarrow a:P\ra L$ and $\uparrow a:P\ra L$ defined by $\downarrow a(x)=e(x,a)$ and
$\uparrow a(x)=e(a,x)$ (see \cite[Def. 3.15]{YSI}), respectively are $f$-invariant. In fact,
for each $x\in P$, $\downarrow a(x)=e(x,a)\leq e(f(x),f(a))=e(f(x),a)=\downarrow a(f(x))$.
So $\downarrow a$ is $f$-invariant. By a similar way,
$\uparrow a$ is $f$-invariant.

(ii) The maps $S_f$ and $T_f$ are $f$-invariant. Let $x\in P$. Since $f$ is monotone, then
$$S_f(x)=e(f(x),x)\leq e(f(f(x)),f(x))=S_f(f(x)),\quad T_f(x)=e(x,f(x))\leq e(f(x),f(f(x)))=T_f(f(x)).$$
Therefore, $S_f$ and $T_f$ are $f$-invariant.
\end{exm}

\begin{thm}\label{thm 4.6}
Let $(P;e)$ be a fuzzy $dcpo$ with zero (that is, there is $0\in P$ such that $e(0,x)=1$
for all $x\in P$), $f\in H_P$, $Y=\{a\in P|\ e(a, f(a))=1\}$ and $M=Fix(f)$. If each element of $Y$ has a
strong $L$-cover in $M$, then $(M;e)$ is a fuzzy sub $dcpo$ of $(P;e)$.
\end{thm}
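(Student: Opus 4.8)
The plan is to show that $(M;e)$, with the restricted $L$-order, is itself a fuzzy $dcpo$, the joins of its fuzzy directed subsets being recovered from the ambient joins in $(P;e)$ via the strong $L$-cover hypothesis; note that ``fuzzy sub $dcpo$'' can only mean this, since $Fix(f)$ need not be closed under the joins computed in $P$. First I would record that $M\neq\emptyset$ and in fact has a bottom element: since $e(0,x)=1$ for every $x\in P$, we trivially get $e(0,f(0))=1$, so $0\in Y$; the hypothesis then supplies a strong $L$-cover $b_0\in M$ of $0$, and $e(b_0,x)=e(0,x)=1$ for all $x\in M$ shows $b_0$ is least in $(M;e)$. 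Also $(M;e)$ is clearly an $L$-ordered set, being a restriction of $(P;e)$.

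Next, given an arbitrary fuzzy directed subset $D$ of $(M;e)$, I would extend it to $D'\in L^P$ by putting $D'|_M=D$ and $D'\equiv 0$ on $P\setminus M$. A routine check shows $D'$ is a fuzzy directed subset of $(P;e)$: (FD1) is immediate from $\bigvee_{x\in P}D'(x)=\bigvee_{x\in M}D(x)=1$, and (FD2) for $D'$ reduces to (FD2) for $D$ once one observes that any pair involving a point of $P\setminus M$ makes the left-hand side $0$. Since $(P;e)$ is a fuzzy $dcpo$, $a:=\sqcup D'$ exists in $P$, and Theorem \ref{join and meet}(i) gives $e(a,x)=\bigwedge_{y\in P}\big(D'(y)\ra e(y,x)\big)$ for all $x\in P$; because $0\ra t=1$, the terms with $y\notin M$ equal $1$, so this infimum equals $\bigwedge_{y\in M}\big(D(y)\ra e(y,x)\big)$.

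The heart of the argument is then to show $a\in Y$, i.e. $e(a,f(a))=1$. From (J1) for $a=\sqcup D'$ we have $D'(y)\leq e(y,a)$ for all $y\in P$; when $y\in M$ we may use $f(y)=y$ and monotonicity of $f$ to get $e(y,a)\leq e(f(y),f(a))=e(y,f(a))$, hence $D'(y)\leq e(y,f(a))$, while this is trivial for $y\notin M$. Consequently $e(a,f(a))=\bigwedge_{y\in P}\big(D'(y)\ra e(y,f(a))\big)=1$, so $a\in Y$. By hypothesis $a$ then has a strong $L$-cover $b\in M$, meaning $e(b,x)=e(a,x)$ for all $x\in M$.

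Finally I would verify that $b$ is the join of $D$ inside $(M;e)$: for $x\in M$ we have $e(b,x)=e(a,x)=\bigwedge_{y\in M}\big(D(y)\ra e(y,x)\big)$, which is precisely the characterisation (Theorem \ref{join and meet}(i)) of $\sqcup D$, equivalently of $\bigsqcup_{M}D$, in the $L$-ordered set $(M;e)$. Since $D$ was arbitrary, every fuzzy directed subset of $(M;e)$ has a join in $M$, obtained from the ambient join $a=\sqcup D'$ through its strong $L$-cover; hence $(M;e)$ is a fuzzy sub $dcpo$ of $(P;e)$. I expect the only genuinely delicate point to be the observation that the ambient join $a$ automatically lies in $Y$ (so that the strong $L$-cover hypothesis applies); the rest is unwinding definitions together with the identity $0\ra t=1$.
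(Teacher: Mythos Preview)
Your proposal is correct and follows essentially the same route as the paper: extend a fuzzy directed $D$ on $M$ by zero to $D'$ on $P$, take $a=\sqcup D'$, show $a\in Y$ via monotonicity of $f$ and the fact that points of $M$ are fixed, and then pass to the strong $L$-cover $b\in M$ to obtain $\bigsqcup_M D$. The only noteworthy difference is how you secure a least element of $M$: you apply the strong $L$-cover hypothesis directly to $0\in Y$, whereas the paper invokes the classical CPO fixpoint theorem on $(P;\leq_e)$; your argument is more self-contained and already yields the bottom of $(M;e)$ without an external citation.
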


\begin{proof}
Clearly, $(P;\leq_e)$ is a CPO, so by \cite[Thm. 8.22]{davey}, $f$  has a fixpoint (or $M$ has a least element).
Assume that each element of $Y$ has a strong $L$-cover. Put a fuzzy directed subset $S$ of $(M;e)$.
Define $\overline{S}:P\ra L$, by $\overline{S}(x)=S(x)$ for all $x\in M$ and $\overline{S}(x)=0$ for
all $x\in P-M$. It can be easily shown that $\overline{S}$ is a fuzzy directed subset of $(P;e)$. It follow that
$\sqcup \overline{S}$ exists. Let $a=\sqcup\overline{S}$. Then for each $x\in P$,
\begin{eqnarray}
\label{thm4.6.1} e(a,x)=\bigwedge_{y\in P}\big(\overline{S}(y)\ra e(y,x) \big)=\bigwedge_{y\in M}\big(S(y)\ra e(y,x) \big)
\end{eqnarray}
and so $1=e(a,a)=\bigwedge_{y\in M}\big(S(y)\ra e(y,a) \big)\leq \bigwedge_{y\in M}\big(S(y)\ra e(f(y),f(a)) \big)=
\bigwedge_{y\in M}\big(S(y)\ra e(y,f(a)) \big)=e(a,f(a))$, which implies that $a\in Y$. Hence by the assumption,
$a$ has a strong $L$-cover in $M$, $b$ say, whence $e(a,x)=e(b,x)$ for all $x\in M$.
From (\ref{thm4.6.1}) it follows that $e(b,x)=e(a,x)=\bigwedge_{y\in M}\big(S(y)\ra e(y,x) \big)$. Therefore,
$b=\bigsqcap_{X}S$ and so $(M;e)$ is a fuzzy $dcpo$ with zero.  Conversely,
let $(M;e)$ be a fuzzy $dcpo$. Put $a\in Y$. Define $S:P\ra L$ by $S(a)=1$ and $S(x)=0$ for all $x\in P-\{a\}$.
\end{proof}

\begin{exm}
Let $(P;\leq)$ be a CPO. It can be easily shown that the $\{0,1\}$-ordered set $(P;e_\leq)$ is a
fuzzy $dcpo$ with zero. Put a monotone map $f:(P;e_\leq)\ra (P;e_\leq)$.
Since $f$ is monotone, then for each $a\in P$ satisfying the condition $a\leq f(a)$, we have
$A=\{x\in P|\ a\leq x\}$ is a CPO and $f:A\ra A$ is a monotone map, so by  \cite[Thm. 8.22]{davey} $f$ has
a least fixpoint on $A$, $b$ say. Let $x$ be another fixpoint of $f$.

(1) If $a\leq x$, then $b\leq x$, so $e_\leq(a,x)=e_\leq(b,x)=1$.

(2) If $a\nleq x$, then $b\nleq x$, so $e_\leq(a,x)=e_\leq(b,x)=0$.

Hence, $b$ is a strong $L$-cover for $a$ in $Fix(f)$, whence $(P;e_\leq)$  satisfies the
conditions of Theorem \ref{thm 4.6}.
\end{exm}



\end{document}